\theoremstyle{plain}
\newtheorem{prop}{Proposition}
\newtheorem{thm}{Theorem}
\newtheorem{lem}{Lemma}
\newtheorem{cor}{Corollary}
\theoremstyle{definition}
\newtheorem{example}{Example}
\newtheorem{defn}{Definition}
\theoremstyle{remark}
\newtheorem{rem}{Remark}
\newcommand{\lie}[1]{\mathfrak{#1}}
\newcommand\bc{\mathbb C}
\newcommand\bn{\mathbb N}
\newcommand\bz{\mathbb Z}
\newcommand{\sign}{\operatorname{sgn}}
\newcommand{\ch}{\operatorname{ch}}
\def\a{\alpha}
\newenvironment{pf}{\proof}{\endproof}
\newcounter{cnt}
\def\mydggeometry{\makeatletter\dg@YGRID=1\dg@XGRID=20\unitlength=0.003pt\makeatother}
\makeatother \theoremstyle{remark}
\numberwithin{equation}{section}
\def\section{\def\@secnumfont{\mdseries}\@startsection{section}{1}%
  \z@{.7\linespacing\@plus\linespacing}{.5\linespacing}%
  {\normalfont\scshape\centering}}
\def\subsection{\def\@secnumfont{\bfseries}\@startsection{subsection}{2}%
  {\parindent}{.5\linespacing\@plus.7\linespacing}{-.5em}%
  {\normalfont\bfseries}}
\begin{document}


\title{On tensor products of irreducible integrable representations}

\author{Shifra Reif}
\thanks{SR was partly funded by ISF Grant  No. 1221/17.}
\address{Bar-Ilan University,  Ramat Gan, Israel}
\email{shifra.reif@biu.ac.il}

\author{R. Venkatesh}
\thanks{RV was partially funded by the grants DST/INSPIRE/04/2016/000848 and MTR/2017/000347.}
\address{Indian Institute of Science, Bangalore, India}
\email{rvenkat@iisc.ac.in}

\begin{abstract}
We consider integrable category $\mathcal{O}$ representations of Borcherds--Kac--Moody algebras whose Cartan matrix is finite dimensional, and 
determine the necessary and sufficient conditions for which the tensor
product of irreducible representations from this category is isomorphic to another. 
This result generalizes a fundamental result of C. S. Rajan on unique factorization of tensor products of finite dimensional 
irreducible representations of finite dimensional simple Lie algebras over complex numbers. 
\end{abstract}
\maketitle
\section{Introduction}

A theorem of Rajan (see \cite{Rajan}) asserts the following fundamental property of tensor products of irreducible representations of a finite dimensional
simple Lie algebra $\lie g$:
Given non-trivial finite--dimensional irreducible $\lie g$--modules $V_1,\ldots,V_r$ and $W_1,\ldots ,W_s$ such that
$$V_1\otimes\ldots\otimes V_r\cong W_1\otimes\ldots\otimes W_s \ \text{as $\lie g$--modules} $$ if and only if
 $r=s$ and the factors are pairwise isomorphic as $\lie g$--modules up to a permutation of indices. 
This statement is equivalent to the family of irreducible finite--dimensional representations having unique factorization property in the 
Grothendieck ring of finite dimensional representations of $\lie g$.
Rajan proved his result by an inductive analysis of the characters of
tensor products, by fixing one of the variables, and passing to a suitable lower rank Lie algebra. 
A more direct and simpler proof of Rajan's theorem is obtained by the second author and Viswanath in \cite{VV12} and they also obtained a natural generalization of Rajan's theorem
to Kac--Moody algebras setting.
A natural category of representations to consider for Kac--Moody algebras is the category $\mathcal{O}^{\mathrm{int}}$, whose objects are integrable $\lie g$--modules in category $\mathcal{O}$, since 
both proofs of \cite{Rajan, VV12} heavily uses the Kac--Weyl character formula.

For a Kac--Moody algebra, the tensor product of two irreducible integrable representations can be irreducible when the Dynkin diagram
is not connected. So the unique factorization property can not hold for Kac--Moody algebras which are not indecomposable.
Even in this case, the unique factorization of tensor products fails in general for Kac--Moody algebras as there may exist 
non-trivial one dimensional representations in $\mathcal{O}^{\mathrm{int}}$.
However these are the only obstruction we have, i.e. uniqueness still holds up to twisting by one-dimensional representations for an indecomposable Kac--Moody
algebra $\lie g$. More
precisely, given non-trivial irreducible integrable $\lie g$-modules  $V_1,\ldots, V_r$ and $W_1,\ldots,W_s$ in category $\mathcal{O}$ such that
$$V_1\otimes\ldots\otimes V_r\cong W_1\otimes\ldots\otimes W_s$$ if and only if
 $r=s$ and there exists a permutation $\sigma$ of $\{1,\ldots,r\}$ such that $V_i\cong W_{\sigma(i)}\otimes Z_i \ \text{as $\lie g$--modules}$ 
for some one-dimensional $\lie g$-modules $Z_i$, $i=1,\ldots,r$.
Note that the one dimensional representations are precisely the units of the Grothendieck ring of the category $\mathcal{O}^{\mathrm{int}}$. 
Thus the unique factorization property still holds in Kac--Moody case up to reordering and multiplying by units.

So, we can naturally ask whether such a unique factorization of tensor products theorem holds for the irreducible integrable representations
of {\em{Borcherds--Kac--Moody algebras}} as they also admit
a character formula similar to the Kac--Weyl character formula.
However there are more obstacles in this case since the building blocks of
Borcherds--Kac--Moody algebras involve Heisenberg algebras. For example let $\lie g$ be the 
Borcherds--Kac--Moody algebra with only one imaginary simple root and no real simple roots and let $L(\lambda)$ be the irreducible integrable representation of $\lie g$
corresponding to the dominant weight $\lambda$.
Then it is easy to see that
$L(\lambda_1)\otimes \cdots \otimes L(\lambda_r)\cong  L(\mu_1)\otimes \cdots \otimes L(\mu_r)$
if and only if $\sum_{i=1}^r\lambda_i=\sum_{j=1}^r\mu_j$. 
So, one can not expect to get the unique factorization property for tensor products of irreducible integrable representations for general Borcherds--Kac--Moody algebras
as in the case of Kac--Moody algebras, even up to one dimensional twists. It is not hard to produce more counter examples when the Borcherds--Kac--Moody algebras has no real simple roots,
see the Section \ref{examplesuniquefails} for some more examples.

It is therefore natural to study when two tensor products (not necessarily same number of components)
of irreducible integrable representations of Borcherds--Kac--Moody algebras will be isomorphic to another. 
In this paper, we address this problem in full generality for 
 Borcherds--Kac--Moody
algebras whose Borcherds--Cartan matrix is finite.
We also prove we do get unique factorization of tensor products under some mild assumption on the integrable irreducible representations of Borcherds--Kac--Moody algebras
(see Section \ref{specialdominant} and Corollary \ref{uniquecor} for more details). 
\begin{thm}\label{introthm}
Let $\lie g$ be an indecomposable Borcherds--Kac--Moody algebra and let $r, s\in\mathbb{N}$ and let
$V_i, \ 1\le i\le r$ and $W_j, \ 1\le j\le s$ be special irreducible $\lie g$--modules in category $\mathcal{O}^{\rm{int}}$ such that
\begin{equation*}
V_1 \otimes \cdots \otimes V_r \cong W_1 \otimes \cdots \otimes W_s.
\end{equation*}
Then we have $r=s$ and
there exists a permutation $\sigma$ on $\{1, \cdots, r\}$ and one--dimensional $\lie g$--modules $Z_i$, $1\le i\le r$ such that 
$V_i=W_{\sigma(i)}\otimes Z_i.$
\end{thm}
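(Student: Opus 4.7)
The plan is to adapt the character-theoretic strategy of \cite{VV12} to the Borcherds--Kac--Moody setting, substituting the Weyl--Kac--Borcherds character formula for the Weyl--Kac formula. The isomorphism $V_1 \otimes \cdots \otimes V_r \cong W_1 \otimes \cdots \otimes W_s$ induces the equality of formal characters
\begin{equation*}
\ch V_1 \cdots \ch V_r = \ch W_1 \cdots \ch W_s
\end{equation*}
in a suitable completion of the group algebra of the weight lattice of $\lie g$, and the theorem reduces to showing that such a product has a unique factorization (up to ordering and tensoring by one--dimensional modules). I plan to peel off one irreducible factor at a time by an extremal-weight analysis and close the argument by induction on $r+s$ together with an induction on the total height of the highest weights.

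After multiplying both sides by a sufficient power of the Weyl--Kac--Borcherds denominator, the identity becomes a controlled alternating sum indexed by elements of the Weyl group and by multisets $S$ of pairwise orthogonal imaginary simple roots orthogonal to the relevant highest weight. The maximal weight occurring on the left--hand side is $\lambda_1 + \cdots + \lambda_r$, the sum of the highest weights of the $V_i$, and it appears with multiplicity one; matching against the right--hand side forces $\sum_i \lambda_i = \sum_j \mu_j$. Next, by examining the multiplicity profile of weights just below this maximum, I would single out a dominant highest weight of one of the factors, say $\lambda_1$, and match it with some $\mu_{\sigma(1)}$ up to an element of the radical of the bilinear form, i.e.\ up to tensoring with a one--dimensional module $Z_1$. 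Cancelling this factor reduces $r$ and $s$ by one, and the induction hypothesis finishes the argument.

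The role of the \emph{special} hypothesis (see Section \ref{specialdominant}) is to rule out precisely the cancellation phenomena exhibited in Section \ref{examplesuniquefails}, where Heisenberg-type imaginary simple roots produce distinct irreducibles whose tensor products share a common character. For special modules, the Borcherds correction sum $\sum_S (-1)^{|S|} e^{-s_S}$ in the numerator of the Weyl--Kac--Borcherds formula collapses to a standard Weyl-sum, so that sub-extremal monomials of the product character still distinguish individual highest weights. Along the way one must verify that the class of special integrable irreducibles is closed under tensoring with one--dimensional representations, which is needed both to identify the twists $Z_i$ and to keep the inductive hypothesis alive after stripping off a factor; this should reduce to checking that twisting by a character in the radical of the bilinear form preserves the orthogonality conditions defining specialness.

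The main obstacle, I expect, is the precise combinatorial analysis of how the Borcherds correction terms interact with the tensor product character even under the special hypothesis. In the Kac--Moody case \cite{VV12}, the Weyl denominator identity cleanly isolates a single $W$-orbit at each inductive step; in the BKM case one must argue that the $S$--sums on the two sides match not merely in aggregate but factor by factor, which is combinatorially more delicate than the height-minimal-weight comparison of \cite{VV12}. Once a uniqueness lemma for extracting one leading factor is in place, the reduction to smaller $r$ and $s$ is routine and the theorem follows.
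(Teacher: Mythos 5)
There is a genuine gap at the heart of your argument, and a misreading of what the \emph{special} hypothesis does. Your reduction to the character identity and the observation that comparing top weights gives $\sum_i\lambda_i=\sum_j\mu_j$ are fine, but the step where you ``single out a dominant highest weight of one of the factors by examining the multiplicity profile of weights just below the maximum'' is not an argument --- it is a placeholder for the entire content of the theorem. Weight multiplicities just below $\sum_i\lambda_i$ only record coarse aggregate data (essentially, for each simple root $\alpha$, how many of the $\lambda_i$ are non-orthogonal to $\alpha$), and this does not determine any individual $\lambda_i$; this is exactly why Rajan's original extremal-weight induction is delicate and why \cite{VV12} replaced it by a different device. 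The paper's proof does not peel off factors by looking near the top of the character. It cancels the denominators to get $U_{\lambda_1}\cdots U_{\lambda_r}=U_{\mu_1}\cdots U_{\mu_s}U_0^{\,r-s}$, takes $-\log$ to turn the product into a sum, and then computes the coefficient of the distinguished monomial $X^\lambda(C)$ in $-\log U(\lambda,\chi)$ to be $\chi(C)\,c(G(C))$, where $c(G(C))>0$ exactly when $C$ is connected (Propositions \ref{cG} and \ref{keyproposition-main}). A projection operator $\#_C$ applied at a maximal connected component $C$, together with a minimal-degree comparison, then forces $U_i(\lambda_p,\chi)=U_j(\mu_q,\chi)$ for some factor on the other side (Lemma \ref{keylemmasulamb}, Theorem \ref{mainUlambdachi}), after which one cancels and inducts. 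You cite \cite{VV12} but never actually deploy its logarithm machinery, and you yourself flag the missing uniqueness lemma as ``the main obstacle''; that lemma is the theorem.

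Your explanation of the special hypothesis is also backwards. A weight $\lambda$ is special when $(\lambda,\alpha)=0$ for \emph{all} imaginary simple roots $\alpha$, so $\Omega(\lambda)$ is as large as possible and the Borcherds correction sum does \emph{not} collapse to an ordinary Weyl sum --- quite the opposite. What specialness buys is that $\Pi(\lambda)=\Pi^{\mathrm{re}}\cup\lambda^{\perp}_{\mathrm{im}}=\Pi$, which is connected since $\lie g$ is indecomposable, so that $U_\lambda$ contributes a single factor rather than a product over the connected components of $\Pi(\lambda)$; combined with Theorem \ref{mainUlambdachi} this yields $r=s$ and the matching up to one-dimensional twists (Corollary \ref{uniquecor} and Lemma \ref{speciallem}). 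The counterexamples of Section \ref{examplesuniquefails} come precisely from non-special weights $\mu$, for which $\Omega(\mu)$ shrinks and $\Pi(\mu)$ disconnects --- not from special ones.
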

\noindent
This theorem is a corollary of Theorem \ref{tensormainthm} which describes when two tensor products of integrable irreducible $\lie g$--modules are isomorphic to each other.

\vskip 2mm
We now explain the main strategy of our  proof. Let $\lambda_1,\ldots, \lambda_r, \ \mu_1,\ldots, \mu_s$ 
be dominant weights of $\lie g$ such that $V_i=L(\lambda_i)$ and $W_j=L(\mu_j)$ for all $1\le i\le r, \ 1\le j\le s$.
Since the category $\mathcal{O}^{\mathrm{int}}$ is completely reducible for any Borcherds--Kac--Moody algebra $\lie g$ (see \cite{Bor88}, \cite{kang}), the
isomorphism of the tensor products of $\lie g$--modules becomes equivalent to the equality of the corresponding characters.
So, taking formal characters on the both sides of tensor products one gets:
\begin{equation}\label{introchracter}
\ch L(\lambda_1)\cdots \ch L(\lambda_r)=\ch L(\mu_1) \cdots \ch L(\mu_s)
\end{equation}
Now by canceling out the Weyl denominators, simplifying we translate our main problem to the
equality of product of the normalized Weyl numerators,
\begin{equation}\label{introUlambda}
U_{\lambda_1}\cdots U_{\lambda_r}=U_{\mu_1}\cdots U_{\mu_s}
\end{equation} (see Section 2.3 for the precise definition of $U_\lambda$).
We show that both sides of the Equation (\ref{introUlambda}) can be further factorized to a product 
$$\prod_{i=1}^r \prod_{j=1}^{r_i} U_j^{\lambda_i}=\prod_{i=1}^s \prod_{j=1}^{s_i} U_j^{\mu_i}, $$
where the terms $U_j^{\lambda}$ are parametrized by the connected components of graphs associated to $\lambda$. We show that this factorization to such terms is unique. 
We prove this using the logarithm techniques developed in \cite{VV12}.
If the graphs of $\lambda_1,\ldots,\lambda_r,\mu_1,\ldots ,\mu_s$ are connected, then we get $r=s$ and $r_i=s_j=1$ for $i, j=1,\ldots,r$ 
and we obtain the unique factorization of the characters and using this we obtain the unique factorization of the tensor products of special integrable irreducible $\lie g-$modules.

In fact, we prove this unique factorization result for more general expressions than the product of normalized Weyl numerators.
We add a parameter $\chi$ to the normalized Weyl numerators where $\chi$ is  
a homomorphism on $W\times Q^{\mathrm{im}}_+$, where $W$ is the Weyl group of $\lie g$ and $Q^{\mathrm{im}}_+$ is the set of
non-negative integer linear combination of imaginary simple roots (see Section \ref{elmUlambchi} for more details). 
The unique factorization property is proved for any $\chi$. When $\chi$ is the sign character, 
we obtain the unique factorization properties of normalized Weyl numerators and hence the characters.

The paper is organized as follows: In Section \ref{section2}, we set up the notations and preliminaries. In
Section \ref{section3}, we introduce the elements $U(\lambda, \chi)$ generalizing the normalized Weyl numerators associated with a homomorphism $\chi$ and
prove the key properties concerning logarithm of $U(\lambda, \chi)$. In Section \ref{main theorem section}, we 
use these properties to prove our main results of the paper.


\vskip 2mm

\emph{\textbf{Acknowledgment.}{ The authors are thankful to Maria Gorelik for helpful conversations.}}


\section{Preliminaries}\label{section2}

\subsection{Borcherds--Kac--Moody algebra}
Throughout this paper our base field will be complex numbers, i.e., all the algebras and representations are complex--vector spaces. The complex numbers, integers, non-negative integers, 
and positive integers are denoted  by $\bc$, $\bz$, $\bz_+$, and $\bn$. 
We recall the definition of Borcherds--Kac--Moody algebras from \cite{Ej96}; they are also called generalized Kac--Moody algebras, see also \cite{Bor88, K90}. 
A real matrix  
$A=(a_{ij})_{i,j\in I}$ indexed by a finite set $I=\{1, \dots, n\}$  is said to be a \textit{Borcherds--Cartan\ matrix}
if the following conditions are satisfied for all $i,j\in I$:
\begin{enumerate}
\item $A$ is symmetrizable;
\item $a_{ii}=2$ or $a_{ii}\leq 0$;
\item $a_{ij}\leq 0$ if $i\neq j$ and $a_{ij}\in\mathbb{Z}$ if $a_{ii}=2$;
\item $a_{ij}=0$ if and only if $a_{ji}=0$.
\end{enumerate} 
Recall that a matrix $A$ is called symmetrizable if $DA$ is symmetric for some diagonal matrix $D=\mathrm{diag}(d_1, \ldots, d_n)$ with positive entries.
Denote by $I^{\mathrm{re}}=\{i\in I: a_{ii}=2\}$ and $I^{\mathrm{im}}=I\backslash I^{\mathrm{re}}$. 
The Borcherds--Kac--Moody algebra  $\lie g=\mathfrak{g}(A)$ associated to the Borcherds--Cartan matrix $A$ is the Lie algebra generated by 
$e_i, f_i, h_i$, $i\in I$ with the following defining relations:
\begin{enumerate}
 \item[(R1)] $[h_i, h_j]=0$ for $i,j\in I$
 \item[(R2)] $[h_i, e_k]=a_{i,k}e_i$,  $[h_i, f_k]=-a_{i,k}f_i$ for $i,k\in I$
 \item[(R3)] $[e_i, f_j]=\delta_{ij}h_i$ for $i, j\in I$
 \item[(R4)] $(\text{ad }e_i)^{1-a_{ij}}e_j=0$, $(\text{ad }f_i)^{1-a_{ij}}f_j=0$ if $i\in I^{\mathrm{re}}$ and $i\neq j$
 \item[(R5)] $[e_i, e_j]=0$ and $[f_i, f_j]=0$ if $a_{ij}=0$.
\end{enumerate}
\begin{rem} 
If $i\in I$ is such that $a_{ii}=0$, the subalgebra spanned by the elements $h_i,e_i,f_i$ is isomorphic to the three dimensional Heisenberg algebra and otherwise it is isomorphic to 
$\mathfrak{sl}_2$. \em{Note that we only consider  the Borcherds--Kac--Moody algebras associated with finite Borcherds--Cartan matrices in this paper.}
\end{rem}

\subsection{Root System}
In this subsection, we recall some of the basic properties of Borcherds--Kac--Moody algebras; see \cite{Ej96} for more details. 
The Borcherds--Kac--Moody algebra $\lie g$ is $\mathbb{Z}^{n}$--graded by defining
$$\text{
$\mathrm{deg}\ h_i=(0, \dots,0)$, $\mathrm{deg}\ e_i=(0,\dots,0,1,0,\dots,0)$ and 
$\text{deg}\ f_i=$ $(0,\dots,0,-1, 0,\dots,0)$}$$
where $\pm 1$ appears at the $i$--th position.
For a sequence $(k_1, \dots, k_n)$, we denote by $\lie g(k_1, \dots, k_n)$ the corresponding graded piece.
Let $\mathfrak{h}=\text{Span}_\mathbb{C}\{h_i: i\in I\}$ be the abelian subalgebra and let
$\mathfrak{E}=\text{Span}_{\mathbb{C}}\{D_i: i\in I\}$,
where $D_i$ denotes the derivation that acts on $\lie g(k_1, \dots, k_{n})$ by 
multiplication by the scalar $k_i$ and zero on the other graded components. Note that $D_i, i\in I$ are commuting derivations of $\lie g$.
The abelian subalgebra $\mathfrak{E}\ltimes \mathfrak{h}$ of $\mathfrak{E}\ltimes \mathfrak{g}$ acts by scalars on $\lie g(k_1,\dots, k_n)$
and giving a root space decomposition:
\begin{equation}\label{rootdec}\mathfrak{g}=\bigoplus _{\alpha \in (\mathfrak{E}\ltimes \mathfrak{h})^*}
\mathfrak{g}_{\alpha }, \ \mathrm{where} \ \mathfrak{g}_{\alpha }
:=\{ x\in \mathfrak{g}\ |\ [h, x]=\alpha(h) x \ 
\mathrm{for\ all}\ h\in \mathfrak{E}\ltimes \mathfrak{h} \}.\end{equation}
Define $\Pi=\{\alpha_i : i\in I\}\subseteq (\mathfrak{E}\ltimes\lie h)^{*}$ by $\alpha_j((D_k,h_i))=\delta_{k,j}+a_{i,j}$. The elements of $\Pi$ are called the simple roots of $\lie g$. 
Set
$$Q:=\bigoplus _{\a\in \Pi}\mathbb{Z}\alpha,\ \ Q_+ :=\sum _{\a\in \Pi}\mathbb{Z}_{+}\alpha \ \  \text{and}\ \ Q_+^{\mathrm{im}} :=\sum _{\a\in \Pi_{\mathrm{im}}}\mathbb{Z}_{+}\alpha.$$
The coroot associated with $\a\in \Pi$ is denoted by $h_\a.$
The set of roots is denoted by $\Delta :=\{ \alpha \in (\mathfrak{E}\ltimes\lie h)^*\backslash \{0\} \mid \mathfrak{g}_{\alpha }\neq 0\}$ and
the set of positive roots is denoted by  $\Delta_+:=\Delta\cap Q_+$. 
The elements in $\Pi^\mathrm{re}:=\{\alpha_i: i\in I^{\mathrm{re}}\}$ and  $\Pi^\mathrm{im}:=\Pi\backslash \Pi^\mathrm{re}$  are called the set of real simple roots and
the set of imaginary simple roots. We have $\Delta =\Delta_+ \sqcup - \Delta_+$ and 
$$\mathfrak{g}_0=\mathfrak{h},\ \ \lie g_\alpha=\lie g(k_1, \dots,k_n),\ \text{ if }\ \alpha=\sum_{i\in I} k_i\alpha_i\in \Delta.$$
\noindent
Moreover, we have a triangular decomposition
$\lie g\cong \lie n^{-}\oplus \lie h \oplus \lie n^+,$
where
$\lie n^{\pm}=\bigoplus_{\alpha \in \pm\Delta_{+}}
\mathfrak{g}_{\alpha}.$
Given $\gamma=\sum_{i\in I}k_i\alpha_i\in Q_+$, we set $\text{ht}(\gamma)=\sum_{i\in I}k_i.$ 
Finally, for $\lambda, \mu\in (\mathfrak{E}\ltimes \mathfrak{h})^*$  we say that $\lambda\ge \mu$ if $\lambda-\mu\in Q_+.$

\subsection{The Weyl group}
The real vector space spanned by $\Delta$ is denoted by $R=\mathbb{R}\otimes_{\bz} Q$. There is a symmetric bilinear form on $R$ given by $(\alpha_i, \alpha_j)=d_i a_{ij}$ 
for $i, j\in I.$ For $\alpha\in \Pi^{\mathrm{re}}$, define the linear isomorphism $\bold{s}_\alpha$ of $R$ by 
$$\bold{s}_\alpha(\lambda)=\lambda-2\frac{(\lambda,\alpha)}{(\alpha,\alpha)}\alpha,\ \ \lambda\in R.$$
The Weyl group $W$ 
of $\mathfrak{g}$ is the subgroup of $\mathrm{GL}(R)$ generated by the simple reflections $\bold{s}_\alpha$, $\alpha\in \Pi^\mathrm{re}$.
Note that the above bilinear form is $W$--invariant and $W$ is a Coxeter group with canonical generators $\bold{s}_\alpha, \a\in \Pi^\mathrm{re}$. 
Define the length of $w\in W$ by  $\ell(w):=\mathrm{min}\{k\in \mathbb{N}: w=\bold{s}_{\a_{i_1}}\cdots \bold{s}_{\a_{i_k}}\}$ and any expression $w=\bold{s}_{\a_{i_1}}\cdots \bold{s}_{\a_{i_k}}$ with 
$k=\ell(w)$ is called a reduced expression. The set of real roots is denoted by $\Delta^\mathrm{re}=W(\Pi^\mathrm{re})$ and the set of imaginary roots is denoted by
$\Delta^\mathrm{im}=\Delta\backslash \Delta^\mathrm{re}$. 
Equivalently, a root $\alpha$ is real if and only if $(\alpha, \alpha)> 0$ and else imaginary. We can extend $(.,.)$ 
to a symmetric form on $(\mathfrak{E}\ltimes \lie h)^*$ satisfying $(\lambda,\alpha_i)=\lambda(d_ih_i)$ and also
$\bold{s}_\alpha$ to a linear isomorphism of $(\mathfrak{E}\ltimes \lie h)^*$ by 
$$\bold{s}_\alpha(\lambda)=\lambda-2\frac{(\lambda,\alpha)}{(\alpha,\alpha)}\alpha,\ \ \lambda\in (\mathfrak{E}\ltimes \lie h)^*.$$
Note that $\lambda(h_\a)=2\frac{(\lambda,\alpha)}{(\alpha,\alpha)}$ for $\a\in \Pi$.
Let $\rho$ be any element of $(\mathfrak{E}\ltimes \lie h)^*$ satisfying $2(\rho,\alpha)=(\alpha,\alpha)$ for all $\a\in \Pi$.


\subsection{Characters}
Denote by $\mathcal{O}^{\rm{int}}$ the category consisting of integrable $\lie g-$modules from the category $\mathcal{O}$ of $\lie g$.
Let
$P_+=\{\lambda\in (\mathfrak{E}\ltimes \mathfrak{h})^*: \lambda(h_{\a})\in\mathbb{Z}_+, \ \a\in \Pi\}$ 
be the set of all dominant integral weights of $\lie g$ and let $L(\lambda)$ be the irreducible highest weight module of $\lie g$ associated to $\lambda\in P_+.$
Then it is well-known that there exists a bijective correspondence between the irreducible objects in $\mathcal{O}^{\rm{int}}$ and $\{L(\lambda): \lambda\in P_+\}$ and the 
category $\mathcal{O}^{\rm{int}}$ is semi-simple (see \cite{Bor88, kang}).
Given $\lambda \in P_+$, the module $L(\lambda)$ has a weight space decomposition $L(\lambda) = \bigoplus_{\mu \in \mathfrak{h}^*} L(\lambda)_\mu$. 
The formal character of $L(\lambda)$ is defined to be  $\text{ch} L(\lambda)  := \sum_{\mu\in \mathfrak{h}^*} \dim(L(\lambda)_\mu)\, e^\mu$.  
Let $\Omega(\lambda)$ be the set of all $\gamma\in Q_+$ such that $\gamma$ is the sum of mutually orthogonal distinct imaginary simple roots which are orthogonal to $\lambda$.
We define the normalized Weyl numerator by:
\begin{equation}\label{Ulambda}
U_\lambda:=  \sum_{(w, \gamma) \in W\times \Omega(\lambda) } (-1)^{\ell(w)+\mathrm{ht}(\gamma)} e^{w(\lambda+\rho -\gamma)-(\lambda + \rho)} .
 \end{equation}
\noindent
Note that $0\in \Omega(\lambda)$ and that an imaginary simple root $\alpha$ is in $\Omega(\lambda)$ if $(\lambda, \alpha)=0.$
The Weyl-Kac character formula gives:

\begin{equation}\label{WeylKac}
\text{ch} L(\lambda)e^{-\lambda}=  
\frac{\sum\limits_{(w, \gamma) \in W\times \Omega(\lambda) } (-1)^{\ell(w)+\mathrm{ht}(\gamma)} e^{w(\lambda+\rho -\gamma)-(\lambda + \rho)}}{\sum\limits_{(w, \gamma) \in W\times \Omega(0) } (-1)^{\ell(w)+\mathrm{ht}(\gamma)} e^{(w\rho-\rho -w\gamma)}}=\frac{U_\lambda}{U_0}
\end{equation}
\noindent
Denote by $\mathcal{A}=\mathbb{C}[[X_\a : \a\in \Pi]]$ the formal power series ring on the variables $X_\a=e^{-\a}$ corresponding to the simple roots of $\lie g$.
Since $e^\mu$ appears in $\mathrm{ch} L(\lambda)e^{-\lambda}$ with nonzero coefficient only when $\lambda\ge \mu$, it follows that
 $U_\lambda\in \mathcal{A}$ for all $\lambda\in P_+.$

\subsection{Special dominant weights}\label{specialdominant} A dominant weight $\lambda\in P_+$ is said to be special if $(\lambda, \a)=0$ for all $\a\in \Pi^{\rm{im}}.$
An integrable irreducible $\lie g$--module $L(\lambda)$ is called special if $\lambda$ is special. We record the following simple lemma for our future use.
\begin{lem}\label{speciallem}
Let $\lambda\in P_+.$ Then we have,
  $\lambda$ is special and $W$--invariant if and only if $L(\lambda)$ is one--dimensional if and only if  $\mathrm{ch} L(\lambda)=e^\lambda$.\qed
\end{lem}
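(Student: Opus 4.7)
The plan is to run the three equivalences around a short cycle, in the order (one-dimensional) $\Leftrightarrow$ ($\mathrm{ch}\,L(\lambda)=e^\lambda$) $\Leftarrow$ ($\lambda$ special and $W$-invariant) $\Leftarrow$ (one-dimensional). The first equivalence is immediate from the weight-space decomposition: since the highest weight vector already realizes the weight $\lambda$ with multiplicity one, $\mathrm{ch}\,L(\lambda)=e^\lambda$ is the same condition as $\dim L(\lambda)=1$.

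For the implication that $L(\lambda)$ one-dimensional forces $\lambda$ to be both special and $W$-invariant, I would use the Serre-type defining relations directly, avoiding the character formula. Let $v_\lambda$ span $L(\lambda)$. For each $i\in I$, the vector $f_i\cdot v_\lambda$ has weight $\lambda-\alpha_i\neq\lambda$, so it lies in a zero weight space and must vanish. Combining $e_i\cdot v_\lambda=0$ with $f_i\cdot v_\lambda=0$ and applying relation (R3), $[e_i,f_i]=h_i$, shows $h_i\cdot v_\lambda=0$, so $\lambda(h_i)=0$ for all $i\in I$. The identity $\lambda(d_ih_i)=(\lambda,\alpha_i)$ recorded in the paper then gives $(\lambda,\alpha_i)=0$ for every simple root. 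For $i\in I^{\mathrm{im}}$ this is the definition of specialness, and for $i\in I^{\mathrm{re}}$ it yields $\mathbf{s}_{\alpha_i}\lambda=\lambda$; since $W$ is generated by these reflections, $\lambda$ is $W$-invariant.

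To close the cycle, I would deduce $\mathrm{ch}\,L(\lambda)=e^\lambda$ from the assumptions that $\lambda$ is special and $W$-invariant by a term-by-term comparison in the Weyl-Kac formula \eqref{WeylKac}. Specialness makes the orthogonality-to-$\lambda$ condition in the definition of $\Omega(\lambda)$ vacuous, so $\Omega(\lambda)=\Omega(0)$. The $W$-invariance $w\lambda=\lambda$ simplifies the exponent of each summand of $U_\lambda$:
$$w(\lambda+\rho-\gamma)-(\lambda+\rho)=w\rho-\rho-w\gamma,$$
which is exactly the exponent appearing in the corresponding summand of $U_0$. Matching summands across the bijection $\Omega(\lambda)=\Omega(0)$ gives $U_\lambda=U_0$, hence $\mathrm{ch}\,L(\lambda)\,e^{-\lambda}=U_\lambda/U_0=1$.

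There is no serious obstacle: the only point that requires a little care is keeping the imaginary and real simple roots on equal footing in the one-dimensional $\Rightarrow$ special/$W$-invariant step, since for $i\in I^{\mathrm{im}}$ one cannot invoke $\mathfrak{sl}_2$-theory but must use (R3) applied to $v_\lambda$ directly; after that, everything is a direct unwinding of definitions and of \eqref{WeylKac}.
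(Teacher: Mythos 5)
Your proof is correct and complete: the cycle (one-dimensional) $\Leftrightarrow$ ($\mathrm{ch}\,L(\lambda)=e^\lambda$), (one-dimensional) $\Rightarrow$ (special and $W$-invariant) via (R3) applied to $v_\lambda$, and (special and $W$-invariant) $\Rightarrow$ ($U_\lambda=U_0$) via the Weyl--Kac formula closes all three equivalences, and you correctly handle the imaginary simple roots where $\mathfrak{sl}_2$-theory is unavailable. The paper states this lemma with \qed and omits the argument entirely, and what you have written is exactly the routine unwinding of definitions that the authors intended.
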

Note that irreducible $\lie g$--module $L(\lambda)$ need not be one-dimensional if $\lambda$ is $W-$invariant. 

\subsection{}\label{defncg}
In this subsection, we recall some definitions and results about general simple graphs that were used to prove that unique factorization property in the Kac--Moody case 
(see \cite[Section 4.2]{VV12}).
Let $\mathcal{G}$ be a simple graph with vertex set $V$ consisting of $n$ elements. 
We say a subset $S$ of $V$ is {\em{independent}} if there is no edge between any two  elements of $S$, i.e. the subgraph spanned by $S$ in $\mathcal{G}$ is totally disconnected. 
The following definition is important for describing the factors of $U_\lambda$ (see also \cite[Section 4.2]{VV12}).
\begin{defn}
A $k$--partition of the graph $\mathcal{G}$ is an ordered $k$--tuple $(J_1,J_2,...,J_k)$ such that the following conditions hold
\begin{enumerate}
 \item the $J_i$ are non-empty pairwise disjoint subsets of the vertex set $V$ whose union is $V$; 
 \item each $J_i$ is an independent subset  of $V$.
\end{enumerate}
We denote by $P_k(\mathcal{G})$ the set of all $k$--partitions of $\mathcal{G}$ and $c_k(\mathcal{G}):=|P_k(\mathcal{G})|$.  We also define 
$$c(\mathcal{G}):=(-1)^n \sum\limits_{k=1}^{n}(-1)^k\frac{c_k(\mathcal{G})}{k}.$$
\end{defn}
\noindent
The following result describes the connection between $c(G)$ and the connectivity.
\begin{prop}\cite[Proposition 2]{VV12}\label{cG}
	For a graph $\mathcal{G}$, we have $c(\mathcal{G})\in \mathbb{Z}_+$ and $c(\mathcal{G})>0 $ if and only if $\mathcal{G}$ is connected.
\end{prop}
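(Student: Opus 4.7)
The plan is to identify $c(\mathcal{G})$ with a derivative of the chromatic polynomial $P(\mathcal{G}, x)$ of $\mathcal{G}$ at the origin, and then to invoke classical facts about $P$. First I would observe that an ordered $k$--partition $(J_1, \ldots, J_k)$ of the vertex set into independent sets is precisely a proper coloring of $\mathcal{G}$ by the palette $\{1, \ldots, k\}$ that uses every color at least once; sorting proper $x$--colorings by which $k$--subset of $\{1, \ldots, x\}$ is actually used yields the standard expansion
$$P(\mathcal{G}, x) = \sum_{k=1}^{n} c_k(\mathcal{G}) \binom{x}{k}.$$
Since $\binom{x}{k}$ vanishes at $x = 0$ for $k \geq 1$ with derivative there equal to $(-1)^{k-1}(k-1)!/k! = (-1)^{k-1}/k$, differentiating term by term gives
$$P'(\mathcal{G}, 0) = \sum_{k=1}^{n} (-1)^{k-1}\frac{c_k(\mathcal{G})}{k},$$
whence the key identity $c(\mathcal{G}) = (-1)^{n+1} P'(\mathcal{G}, 0)$.

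Next I would invoke two classical properties of $P(\mathcal{G}, x)$, both proved by induction via the deletion--contraction recursion $P(\mathcal{G}, x) = P(\mathcal{G} - e, x) - P(\mathcal{G}/e, x)$ combined with multiplicativity $P(\mathcal{G}_1 \sqcup \mathcal{G}_2, x) = P(\mathcal{G}_1, x) P(\mathcal{G}_2, x)$: Whitney's theorem that $P(\mathcal{G}, x) = \sum_{i=0}^n (-1)^i a_i x^{n-i}$ with each $a_i$ a non--negative integer, and the fact that the multiplicity of $x$ as a factor of $P(\mathcal{G}, x)$ equals the number of connected components of $\mathcal{G}$. If $\mathcal{G}$ is disconnected then $x^2 \mid P(\mathcal{G}, x)$, so $P'(\mathcal{G}, 0) = 0$ and $c(\mathcal{G}) = 0$. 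If $\mathcal{G}$ is connected, then $P'(\mathcal{G}, 0)$ is the coefficient of $x$ in $P(\mathcal{G}, x)$, namely $(-1)^{n-1} a_{n-1}$ with $a_{n-1} \geq 1$, so $c(\mathcal{G}) = a_{n-1}$ is a positive integer.

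The main obstacle I anticipate is the strict positivity $a_{n-1} \geq 1$ for a connected graph---equivalently, the assertion that $x$ divides $P(\mathcal{G}, x)$ with multiplicity exactly $1$, not merely at least $1$. This is the sharp half of the second classical property above, and it cannot be read off directly from the sign--alternation induction; I would settle it either by a dedicated induction on edges (using deletion--contraction while controlling connectivity) or by invoking a combinatorial interpretation of $a_{n-1}$ such as the count of acyclic orientations of $\mathcal{G}$ with a prescribed source, which is manifestly positive for connected $\mathcal{G}$.
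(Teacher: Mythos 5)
The paper does not actually prove this proposition; it quotes it from \cite{VV12}, and your argument---identifying $c(\mathcal{G})$ with $(-1)^{n+1}P'(\mathcal{G},0)$ via the expansion $P(\mathcal{G},x)=\sum_{k}c_k(\mathcal{G})\binom{x}{k}$ and then invoking sign-alternation of the coefficients together with the fact that the multiplicity of the root $x=0$ equals the number of connected components---is correct and is essentially the proof given in that reference. Your flagged point, the strict positivity of the linear coefficient for connected $\mathcal{G}$, is genuinely the only delicate step, and either of your proposed fixes (deletion--contraction tracking connectivity, or Stanley/Greene--Zaslavsky's acyclic-orientation interpretation) closes it.
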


\subsection{Graph of $\lie g$ and $\lambda$}\label{defnconnected}
Let $G$ be the simple graph underlying the Dynkin diagram of $\lie g$, i.e. the vertex set of $G$ is $\Pi$ and two vertices $\alpha, \beta\in 
\Pi$ is connected by an edge if and only if $(\alpha, \beta)\neq 0$. We will refer to $G$ as the graph of $\lie g.$ 
For any subset $S\subseteq \Pi$, we denote $|S|$ by the number elements in $S$ and denote $G(S)$ by the subgraph spanned by the subset $S$. 
We say a subset $S\subseteq \Pi$ is connected if the corresponding subgraph spanned by $S$ is connected, i.e., $G(S)$ is connected.
For $\lambda \in P_+$, define $$\text{ $\lambda^{\perp}_{\mathrm{im}} = \{\alpha \in \Pi^{\mathrm{im}} : (\lambda, \alpha) = 0 \}$ and 
$\Pi(\lambda)=\Pi^{\mathrm{re}}\cup \lambda^{\perp}_{\mathrm{im}}$.} $$
Note that the set $\Pi(\lambda)$ need not be connected in general and $\Pi(\lambda)=\Pi$ if $\lambda$ is special. The subgraph spanned by $\Pi(\lambda)$ will be called the graph of $\lambda.$
Denote $\mathcal{C}(\lambda)$ by the set of all connected subsets of $\Pi(\lambda)$ and it is naturally a poset under the set inclusion.
The set of all connected components of $\Pi(\lambda)$ is denoted as
$\mathcal{MC}(\lambda)$ and let $k_\lambda=|\mathcal{MC}(\lambda)|$ denote the number of connected components of $\Pi(\lambda)$. 
Note that the elements of $\mathcal{MC}(\lambda)$ are nothing but the maximal elements in $\mathcal{C}(\lambda)$ with respect to set inclusion.
More generally for $\overline{\lambda}=(\lambda_1,\cdots, \lambda_k) \in P_+^{k}$, we denote $\mathcal{MC}(\overline{\lambda})$ to be the multiset 
$\sqcup_{i=1}^k \mathcal{MC}(\lambda_i)$.

\vskip 2mm
For $k\in \mathbb{N}$, we denote by $S_k$ the set of permutations of $\{1, \cdots, k\}$.


\section{Logarithm Expansion of Characters}\label{section3}
In this section, we introduce the elements $U(\lambda,\chi)$ and $U_i(\lambda,\chi)$ that generalize the normalized Weyl numerator $U_\lambda$.
We give all the arithmetic tools to prove criteria for their factorization, namely logarithm of characters and projection operators.
\subsection{The Weyl group action} We discuss the Weyl group action on the elements $\lambda+\rho-\gamma, \lambda\in P_+$ and $\gamma\in \Omega(\lambda)$  in order to understand the monomials 
$e^{w(\lambda+\rho-\gamma)-(\lambda+\rho)}.$
For $w\in W$,
we fix a reduced word expression $w=\bold {s}_{i_1}\cdots \bold{s}_{i_k}$ and let $I(w)=\{\alpha_{i_1},\dots,\alpha_{i_k}\}\subseteq \Pi^{\mathrm{re}}$. 
Note that $I(w)$ is independent of the choice of the reduced expression of $w$ (see \cite{Hump}). Let $\lambda\in P_+$ and for $\gamma\in \Omega(\lambda)$ we set 
$$I(\gamma)=\{\alpha\in \Pi^{\mathrm im} : \mbox{ $\alpha$ is a summand of $\gamma$}\},$$ namely $I(\gamma)=\{\a_{i_1}, \cdots, \a_{i_r}\}$ if
$\gamma=\sum_{k=1}^r\a_{i_k}$. Note that the sets $I(\gamma)$ depends on $\lambda$, but we simply denote it as $I(\gamma)$ for convenience. Define
$$\mathcal{J}(\lambda):=\left\{ (w, \gamma)\in W\times \Omega(\lambda)\backslash \{(1, 0)\}: I(w)\cup I(\gamma)\ \text{is an independent subset of $\Pi$}\right\}.$$
 Note that $(w, 0)\in \mathcal{J}(\lambda)$ if and only if $I(w)$ is an independent subset of $\Pi^{\mathrm{re}}$ and $(1, \gamma)\in \mathcal{J}(\lambda)$ for all nonzero $\gamma\in \Omega(\lambda)$. 
 We record the following simple lemma for future use (see \cite{Hump}).
\begin{lem}\label{lengthofw}
 If $I(w)$ is an independent subset of $\Pi^{\mathrm{re}}$ for $w\in W$, then we have $\ell(w)=|I(w)|.$
\end{lem}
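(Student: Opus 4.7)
The plan is to prove both inequalities $|I(w)|\leq \ell(w)$ and $\ell(w)\leq |I(w)|$ under the independence hypothesis. The first inequality is immediate from the definition: if $w=\bold{s}_{\a_{i_1}}\cdots\bold{s}_{\a_{i_k}}$ is any reduced expression of length $k=\ell(w)$, then $I(w)=\{\a_{i_1},\dots,\a_{i_k}\}$ has at most $k$ elements.

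The reverse inequality is where the independence hypothesis enters. Since $I(w)\subseteq \Pi^{\mathrm{re}}$ is independent, for any two distinct $\a,\beta\in I(w)$ we have $(\a,\beta)=0$ by the definition of the graph of $\lie g$ in Section~\ref{defnconnected}. Using $(\a,\beta)=d_\a a_{\a\beta}$ with $d_\a>0$, this forces $a_{\a\beta}=0$, and by the fundamental Coxeter relations (which are also implied by the defining relation (R5) combined with (R4) in rank two), the simple reflections $\bold{s}_\a$ and $\bold{s}_\beta$ commute. Now fix a reduced expression $w=\bold{s}_{\a_{i_1}}\cdots\bold{s}_{\a_{i_k}}$. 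If some simple reflection $\bold{s}_{\a}$ appeared twice in this word, then since all simple reflections indexed by elements of $I(w)$ commute pairwise, we could move the two occurrences of $\bold{s}_\a$ adjacent to each other without changing the element represented. Applying $\bold{s}_\a^2=1$ would then produce a shorter word for $w$, contradicting the assumption that $k=\ell(w)$. Hence every simple reflection occurs at most once in the reduced expression, giving $\ell(w)=k\leq |I(w)|$.

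Combining the two inequalities yields $\ell(w)=|I(w)|$, as desired. The only nontrivial step is the commutation argument, and this rests entirely on the independence of $I(w)$ translating (via the symmetrizability of $A$) into vanishing of the off-diagonal Cartan entries, which in turn forces the relevant Coxeter exponents to equal~$2$. No new machinery is needed beyond the standard fact (cited from \cite{Hump}) that $I(w)$ is a well-defined invariant of $w$ independent of the reduced expression chosen.
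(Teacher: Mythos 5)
Your proof is correct. The paper gives no argument for this lemma, simply citing Humphreys, and your two-inequality argument --- $|I(w)|\le\ell(w)$ trivially from a reduced word, and $\ell(w)\le|I(w)|$ because independence forces $(\a,\beta)=0$, hence $a_{\a\beta}=0$, hence commuting reflections, so a repeated letter in a reduced word could be cancelled --- is exactly the standard proof the citation points to (the parabolic subgroup on an independent set is elementary abelian and its length function counts letters). Nothing is missing.
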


Given $\lambda\in P_+$, $w\in W$ and $\gamma\in \Omega(\lambda)$, we define $b_{\alpha}^\lambda(w,\gamma)\in \mathbb Z$ for every $\alpha\in\Pi$ such that
$(\lambda + \rho)-w(\lambda+\rho-\gamma)=\sum_{\alpha\in\Pi} b_{\alpha}^\lambda(w,\gamma)\alpha$.
\begin{lem}\label{helplem}
For a given $\lambda\in P_+$, $w\in W$ and $\gamma\in \Omega(\lambda)$, 
we have:
\begin{enumerate}
\item[\emph{(i)}] $b_{\alpha}^\lambda(w,\gamma)\in \mathbb{Z}_{+}$ for all $\alpha\in \Pi$ and $b_{\alpha}^\lambda(w,\gamma)= 0$ if $\alpha\notin I(w)\cup I(\gamma)$;
\item[\emph{(ii)}] $I(w)=\big\{\alpha\in \Pi^{\mathrm re} : b_{\alpha}^\lambda(w,\gamma)\geq \frac{2(\lambda+\rho,\ \alpha)}{(\alpha,\ \alpha)}\big\}$;
\item[\emph{(iii)}] If $(w, \gamma)\in \mathcal{J}(\lambda)$, then $b_{\alpha}^\lambda(w,\gamma)=\frac{2(\lambda+\rho,\ \alpha)}{(\alpha,\ \alpha)}$ for all 
$\alpha\in I(w),\  b_{\alpha}^\lambda(w,\gamma)=1$ 
if $\alpha\in I(\gamma)$, and $b_{\alpha}^\lambda(w,\gamma)=0$, otherwise;
\item[\emph{(iv)}] If $(w, \gamma)\notin \mathcal{J}(\lambda)\cup \{(1, 0)\}$, then $\exists\ \alpha\in I(w)\subseteq  \Pi^{\mathrm re}$ such that
$b_{\alpha}^\lambda(w,\gamma)>\frac{2(\lambda+\rho,\ \alpha)}{(\alpha,\ \alpha)}$. 
\end{enumerate}

\begin{pf}
We prove the lemma by induction on $\ell(w)$. If $\ell(w)=0$, then (i)--(iii) are obvious. 
Let $\alpha\in \Pi^\mathrm{re}$ such that $w=\bold s_{\alpha}u$ and $\ell(w)=\ell(u)+1$. Then
\begin{align*}\notag (\lambda + \rho)-w(\lambda+\rho-\gamma)&=(\lambda+\rho)-\bold s_{\alpha}u(\lambda+\rho-\gamma)&\\&\label{1}=
(\lambda+\rho)-u(\lambda+\rho-\gamma)+\left(2\frac{(\lambda,u^{-1}\alpha)}{(\alpha,\alpha)}+2\frac{(\rho,u^{-1}\alpha)}{(\alpha,\alpha)}
-2\frac{(\gamma,u^{-1}\alpha)}{(\alpha,\alpha)}\right)\alpha.
\end{align*}
By induction hypothesis we know that $(\lambda+\rho)-u(\lambda+\rho-\gamma)$ has the required property, and since $\ell(w)=\ell(u)+1$ and $\a\in \Pi^{\mathrm{re}}$, we also know
$u^{-1}\alpha\in \Delta^{\mathrm re} \cap \Delta_+$. This implies that $2\frac{(\lambda+\rho,\ u^{-1}\alpha)}{(\alpha,\ \alpha)}\in \mathbb{N}$, because $\lambda\in P_+$.
Further, $\gamma$ is a sum of imaginary simple roots and $a_{ij}\leq 0$ whenever $i\neq j$. 
Hence $-2\frac{(\gamma,u^{-1}\alpha)}{(\alpha,\alpha)}\in \mathbb{Z}_{+}$ and since $I(w)=I(u)\cup \{\alpha\}$, the proof of (i) is done.
To prove the Statement (ii), observe that there are two possibilities: $I(w)=I(u)$ or $I(w)=I(u)\sqcup \{\a\}$. 
Statement (ii) is immediate if $I(w)=I(u)$. Suppose $I(w)=I(u)\sqcup \{\a\}$, then we have
 $u^{-1}\alpha-\a\in Q_+$ since $u^{-1}\alpha\in \Delta^{\mathrm re} \cap \Delta_+$ and  $\a\notin I(u)$. This implies that 
 $2\frac{(\lambda+\rho,\ u^{-1}\alpha-\a)}{(\alpha,\ \alpha)}\ge 0$ as $\lambda\in P_+$.
 Hence (ii) follows.
\vskip 1.5mm
If $(w, \gamma)\in \mathcal{J}(\lambda)$, then we have $I(w)=I(u)\sqcup \{\a\}$ and $(u, \gamma)\in \mathcal{J}(\lambda)$. This implies that $u^{-1}\alpha=\alpha$ and so
$(\rho,u^{-1}\alpha)=(\rho,\alpha)=\frac{1}{2}(\alpha,\alpha)$ and $(\gamma,u^{-1}\alpha)=(\gamma,\alpha)=0$. 
Thus (iii) follows from the above expression of $(\lambda+\rho)-w(\lambda+\rho-\gamma)$  by induction.

\vskip 1.5mm
It remains to prove (iv). Suppose $w=\bold s_{\alpha}$ we have
$$(\lambda + \rho)-w(\lambda+\rho-\gamma)=\gamma+\left(1+2\frac{(\lambda,\alpha)}{(\alpha,\alpha)}-2\frac{(\gamma,\alpha)}{(\alpha,\alpha)}\right)\alpha.$$
Since $(w, \gamma)\notin \mathcal{J}(\lambda)\cup \{(1, 0)\}$, $\{\a\}$ and $I(\gamma)$ are not independent and
we get $-2\frac{(\gamma,\alpha)}{(\alpha,\alpha)}\in\mathbb{N}$.  This completes the proof of (iv) for $w=\bold s_\a$, giving us a base for induction.
For the induction step we write $w=\bold s_{\alpha}u$ where $\ell(w)=\ell(u)+1$. We have either 
$(u, \gamma)\in\mathcal{J}(\lambda)\cup \{(1, 0)\}$ or 
$(u, \gamma)\notin\mathcal{J}(\lambda)\cup \{(1, 0)\}$. 
In the latter case, we are done by using the induction hypothesis since $I(u)\subseteq I(w)$.
So, assume that $(u, \gamma)\in\mathcal{J}(\lambda)\cup \{(1, 0)\}$. Since $(w, \gamma)\notin \mathcal{J}(\lambda)\cup \{(1, 0)\}$, we have $(u, \gamma)\neq (1, 0)$ and
$I(w)=I(u)\sqcup \{\a\}$ and there are two possibilities: 
either $I(u)\cup \{\a\}$ is independent or $\{\a\}\cup I(\gamma)$ is independent. However we can not have that both of them are independent.
\vskip 1.5mm
\noindent
Case 1: Suppose $I(u)\cup \{\a\}$ is independent and $\{\a\}\cup I(\gamma)$ is not independent,  then we have $(\gamma,u^{-1}\alpha)=(\gamma,\alpha)<0$ 
and $-2\frac{(\gamma,u^{-1}\alpha)}{(\alpha,\alpha)}\in \mathbb{N}$, so we are done by interpreting this in the above expression of $(\lambda+\rho)-w(\lambda+\rho-\gamma)$.

\noindent
Case 2. Suppose $\{\a\}\cup I(\gamma)$ is independent and $I(u)\cup \{\a\}$ is not independent, then we have $u^{-1}(\a)-\a\in Q_+\backslash \{0\}$ since 
$u^{-1}\alpha\in \Delta^{\mathrm re} \cap \Delta_+$ and $\a\notin I(u).$
So, we get $2\frac{(\lambda+\rho,\ u^{-1}\alpha-\a)}{(\alpha,\ \alpha)}\ge 1$. 
Now interpreting this in the above expression of $(\lambda+\rho)-w(\lambda+\rho-\gamma)$, the assertion follows.
\end{pf}
\end{lem}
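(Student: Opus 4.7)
My plan is to establish all four statements simultaneously by induction on $\ell(w)$. The base case $w=1$ is immediate from $(\lambda+\rho)-(\lambda+\rho-\gamma)=\gamma$: we get $b^\lambda_\alpha(1,\gamma)=1$ for $\alpha\in I(\gamma)$ and zero otherwise, from which (i)--(iii) follow, while (iv) is vacuous since $(1,\gamma)\in\mathcal{J}(\lambda)\cup\{(1,0)\}$ for every $\gamma\in\Omega(\lambda)$.

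For the inductive step, I would fix $\alpha\in\Pi^{\mathrm{re}}$ with $w=\bold{s}_\alpha u$ and $\ell(w)=\ell(u)+1$, and use the identity
$$(\lambda+\rho)-w(\lambda+\rho-\gamma) = \bigl[(\lambda+\rho)-u(\lambda+\rho-\gamma)\bigr] + c\cdot\alpha, \qquad c := \frac{2(\lambda+\rho-\gamma,\,u^{-1}\alpha)}{(\alpha,\alpha)},$$
so that $b^\lambda_\beta(w,\gamma)=b^\lambda_\beta(u,\gamma)$ for $\beta\neq\alpha$ and $b^\lambda_\alpha(w,\gamma)=b^\lambda_\alpha(u,\gamma)+c$. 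The three structural inputs driving the induction are: $u^{-1}\alpha\in\Delta^{\mathrm{re}}\cap\Delta_+$ (since $\ell(\bold{s}_\alpha u)>\ell(u)$); $\frac{2(\lambda+\rho,u^{-1}\alpha)}{(\alpha,\alpha)}\in\mathbb{N}$ (because $\lambda$ is dominant integral and $\rho$ has positive pairings on real positive coroots); and $-\frac{2(\gamma,u^{-1}\alpha)}{(\alpha,\alpha)}\in\mathbb{Z}_+$ (because $\gamma$ is a $\mathbb{Z}_+$-combination of imaginary simple roots and $a_{ij}\le 0$ for $i\neq j$). Together these force $c\ge 1$, giving (i) immediately.

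For (ii), I would split on whether $\alpha\in I(u)$ (so $I(w)=I(u)$ and the inequality is only strengthened at $\alpha$) or $\alpha\notin I(u)$ (so $I(w)=I(u)\sqcup\{\alpha\}$); in the latter case the key observation is that iterating the simple reflections in a reduced expression of $u^{-1}$ preserves the coefficient of $\alpha$, so $u^{-1}\alpha-\alpha$ lies in the $\mathbb{Z}_+$-span of $\Pi^{\mathrm{re}}\setminus\{\alpha\}$, and positivity of $(\lambda+\rho,\beta)$ on real simple $\beta$ supplies the required threshold. For (iii), assuming $(w,\gamma)\in\mathcal{J}(\lambda)$, Lemma \ref{lengthofw} gives $|I(w)|=\ell(u)+1$, forcing $I(w)=I(u)\sqcup\{\alpha\}$, $I(u)$ independent, and $\alpha\perp I(u)$; then $u$ is a product of commuting orthogonal real reflections fixing $\alpha$, so $u^{-1}\alpha=\alpha$ and $(\gamma,\alpha)=0$, yielding $c=\frac{2(\lambda+\rho,\alpha)}{(\alpha,\alpha)}$ exactly, with the other coordinates obtained from the induction hypothesis for $(u,\gamma)$.

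The genuine difficulty is (iv). For the base $w=\bold{s}_\alpha$, the hypothesis forces $\gamma\neq 0$ and $\{\alpha\}\cup I(\gamma)$ non-independent, so $(\gamma,\alpha)<0$ strictly and the claim is immediate at $\alpha$. For the inductive step $w=\bold{s}_\alpha u$ with $\ell(u)\ge 1$, I would argue as follows. If $(u,\gamma)\notin\mathcal{J}(\lambda)\cup\{(1,0)\}$, induction provides a witness $\beta\in I(u)\subseteq I(w)$ whose strict inequality transfers. Otherwise $(u,\gamma)\in\mathcal{J}(\lambda)\cup\{(1,0)\}$, so $I(u)$ is independent; if $\alpha\perp I(u)$ then $u^{-1}\alpha=\alpha$ and the base-case argument for $\bold{s}_\alpha$ reappears verbatim (the non-independence of $I(w)\cup I(\gamma)$ forces $(\alpha,\gamma)<0$); if $\alpha\not\perp I(u)$, then $u^{-1}\alpha-\alpha$ is a nonzero nonnegative integral combination of real simple roots, and regular dominance of $\lambda+\rho$ on $\Pi^{\mathrm{re}}$ pushes $\frac{2(\lambda+\rho,u^{-1}\alpha-\alpha)}{(\alpha,\alpha)}\ge 1$, giving $b^\lambda_\alpha(w,\gamma)=c>\frac{2(\lambda+\rho,\alpha)}{(\alpha,\alpha)}$. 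This last subcase, where the strict positivity must come from the interaction of $\alpha$ with $I(u)$ rather than with $I(\gamma)$, is the most delicate point of the proof.
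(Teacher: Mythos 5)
Your proposal is correct and follows essentially the same route as the paper: induction on $\ell(w)$ via the identity $(\lambda+\rho)-\bold{s}_\alpha u(\lambda+\rho-\gamma)=(\lambda+\rho)-u(\lambda+\rho-\gamma)+\tfrac{2(\lambda+\rho-\gamma,\,u^{-1}\alpha)}{(\alpha,\alpha)}\alpha$, with the same three structural inputs and the same case analysis for (iv) (your dichotomy on whether $\alpha\perp I(u)$ is just a slightly cleaner organization of the paper's two cases). No gaps.
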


\begin{rem} The above lemma is  a generalization of \cite[Lemma 2]{VV12} in the setting of Borcherds--Kac--Moody algebras, see also \cite[Lemma 3.6]{AKV17} for the special case $\lambda=0$. 
\end{rem}
\subsection{The elements $U(\lambda, \chi)$}\label{elmUlambchi}
We define the elements $U(\lambda,\chi)$ in $\mathcal A$ which generalize the Weyl denominators $U_\lambda$. 
We shall later determine in which cases these elements admit unique factorization and how
they factorize in other cases. 

Let  $\chi: W\times Q^{\mathrm{im}}_+ \to \mathbb C \setminus \{0\}$ be a homomorphism that is,
 $$\chi(1, 0)=1 \ \text{and} \ \chi\left(\prod\limits_{i=1}^{r}w_i, \sum\limits_{i=1}^{r}\gamma_i\right)=\chi(w_1, \gamma_1)\cdots \chi(w_r, \gamma_r)$$ for $w_1,\ldots, w_r\in W$ and $\gamma_1,\ldots, \gamma_r\in Q^{\mathrm{im}}_+.$
For $\lambda \in P_+$ and such $\chi$, we define the normalized Weyl numerator associated with $(\lambda, \chi)$ as follows:
$$U(\lambda, \chi):= \sum_{(w,\gamma) \in W \times \Omega(\lambda) } \chi(w, \gamma)e^{w(\lambda+\rho-\gamma)-(\lambda + \rho)}$$
Since $w(\lambda+\rho)\le \lambda+\rho$ for all $w\in W$, we have $U(\lambda, \chi)\in \mathcal{A}$. 
 There are two important homomorphisms which are especially useful. 
One is the trivial homomorphism defined to be 
$$\mathds{1}(w, \gamma):=1$$ for all $(w, \gamma)\in W\times Q^{\mathrm{im}}_+ \label{one morphism}$ and another one is the 
 sign homomorphism which is defined by 
 $$\sign(w, \gamma):=(-1)^{\ell(w)}(-1)^{\mathrm{ht}(\gamma)} \label{sign morphism}$$
Clearly, $U(\lambda, \sign)=U_\lambda$, that is $U(\lambda, \chi)$ generalizes the Weyl numerator, and 
 $$U(\lambda, \mathds{1})= \sum_{(w,\gamma) \in W \times \Omega(\lambda) } e^{w(\lambda+\rho-\gamma)-(\lambda + \rho)}.$$

\subsection{Logarithm of $U(\lambda, \chi)$}
We recall the notion of logarithm of elements in $\mathcal A$ which is our main tool to make the product $U(\lambda_1,\chi)\cdots U(\lambda_r,\chi)$ into a sum $\sum_{i=1}^r\log U(\lambda_i,\chi)$ which is less difficult to analyze.
Given an element $f\in \mathcal{A}$, we define
$$-\mathrm{log}(1-f)=f+\frac{f^2}{2}+\frac{f^3}{3}+\cdots+\frac{f^k}{k}+\cdots .$$
Note that $-\mathrm{log}(1-f)\in \mathcal{A}$ for all $f\in \mathcal{A}$ and $\mathrm{log}(fg)=\mathrm{log}(f)+\mathrm{log}(g)$ for all 
elements $f, g\in \mathcal{A}$ with constant terms $1$.

\vskip 1.5mm
\noindent
Now write $\zeta(\lambda, \chi):=1-U(\lambda, \chi)$ and for $(w, \gamma)\in W\times \Omega(\lambda)$ we denote the monomial 
$$
X(\lambda, w,\gamma):=e^{w(\lambda+\rho -\gamma)-(\lambda + \rho)}=\prod\limits_{\alpha\in\Pi}X_{\alpha}^{b_{\alpha}^\lambda(w,\gamma)}.$$ 
For a subset $C$ of $\Pi$, denote by 
$$
X^\lambda(C)  :=
 \prod\limits_{\alpha \in C^{\mathrm{re}}}X_{\alpha}^{\langle\lambda+\rho,\ \alpha^{\vee}\rangle} \prod\limits_{\alpha \in C^{\mathrm{im}}}X_{\alpha}$$ and 
$$
\chi(C)  
=\prod\limits_{i\in C^{\mathrm{re}}}\chi (\bold s_i,0)\prod\limits_{i\in C^{\mathrm{im}}}\chi(e, \a_i).$$

One of the key ingredients in the proof of our main theorem is to we investigate the elements $-\mathrm{log}\ U(\lambda, \chi)$ 
and give necessary and sufficient conditions for the monomials $X^\lambda(C)$ to appear in $-\mathrm{log}\ U(\lambda, \chi)$ with nonzero coefficient. 
The following proposition is a generalization of \cite[Proposition 1]{VV12} for Borcherds--Kac--Moody algebras.
Recall the definitions of $\Pi(\lambda)=\Pi^{\rm{re}}\cup \lambda^{\perp}_{\rm{im}}$ and $c(\mathcal{G})$ from the Sections \ref{defnconnected} and \ref{defncg}.
\begin{prop}\label{keyproposition-main}
Let $\chi: W\times Q^{\mathrm{im}}_+ \to \mathbb C \setminus \{0\}$ be a homomorphism and let $\lambda\in P_+$ and $C\subseteq \Pi$.
\begin{enumerate}
\item[\emph{(i)}] Suppose the monomial $\prod_{\a\in\Pi}X_\a^{a_\a}$ appears in $-\mathrm{log}\ U(\lambda, \chi)$ with nonzero coefficient then the support of this monomial
 must be contained in $\Pi(\lambda),$ i.e. $$\mathrm{supp}\big(\prod_{\a\in\Pi}X_\a^{a_\a}\big)=\{\a\in \Pi : a_\a\neq 0 \}\subseteq \Pi(\lambda).$$

\item[\emph{(ii)}]  The coefficient of the monomial $X^\lambda(C)$ in $-\mathrm{log}\ U(\lambda, \chi)$ is equal to $\chi(C)c(G(C))$ for $C\subseteq{\Pi(\lambda)}$, where
$G(C)$ is the subgraph spanned by $C.$ In particular, this coefficient depends only on the subset $C.$

 \item[\emph{(iii)}] The monomial $X^\lambda(C)$ appears in $-\mathrm{log}\ U(\lambda, \chi)$ with nonzero coefficient if and only if $C$ is a connected subset of $\Pi(\lambda)$.

\end{enumerate}

\end{prop}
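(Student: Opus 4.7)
The plan is to expand $-\log U(\lambda,\chi) = \zeta(\lambda,\chi) + \zeta(\lambda,\chi)^2/2 + \zeta(\lambda,\chi)^3/3 + \cdots$, where $\zeta(\lambda,\chi) = -\sum_{(w,\gamma)\neq(1,0)} \chi(w,\gamma)\, X(\lambda,w,\gamma)$, and then read off the coefficient of each monomial $X^\lambda(C)$ by determining which tuples $((w_1,\gamma_1),\ldots,(w_k,\gamma_k))$ give products that match $X^\lambda(C)$.

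For (i), I would observe that by Lemma \ref{helplem}(i) each monomial $X(\lambda,w,\gamma)$ in $\zeta(\lambda,\chi)$ has support contained in $I(w)\cup I(\gamma)$. Since $I(w)\subseteq\Pi^{\mathrm{re}}$ and since $\gamma\in\Omega(\lambda)$ is by definition a sum of distinct imaginary simple roots orthogonal to $\lambda$, we have $I(\gamma)\subseteq\lambda^\perp_{\mathrm{im}}$. Hence each such support lies in $\Pi^{\mathrm{re}}\cup\lambda^\perp_{\mathrm{im}}=\Pi(\lambda)$. Supports of products are unions of supports, so every monomial of $\zeta^k$, and therefore of $-\log U(\lambda,\chi)$, is supported in $\Pi(\lambda)$.

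For (ii), fix $C\subseteq\Pi(\lambda)$ and examine contributions to $X^\lambda(C)$ from a product $\prod_i X(\lambda,w_i,\gamma_i)$ inside $\zeta^k$. The key step is to use Lemma \ref{helplem}(iv) to rule out any $(w_i,\gamma_i)\notin\mathcal{J}(\lambda)$: such a pair forces some $\alpha\in I(w_i)\subseteq\Pi^{\mathrm{re}}$ to satisfy $b_\alpha^\lambda(w_i,\gamma_i)>\langle\lambda+\rho,\alpha^\vee\rangle$, and since all other $b_\alpha^\lambda(w_j,\gamma_j)$ are nonnegative (Lemma \ref{helplem}(i)), the total exponent of $X_\alpha$ in the product would strictly exceed the exponent required by $X^\lambda(C)$. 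Thus only tuples with all $(w_i,\gamma_i)\in\mathcal{J}(\lambda)$ contribute, and for these Lemma \ref{helplem}(iii) shows $X(\lambda,w_i,\gamma_i)=X^\lambda(J_i)$ where $J_i := I(w_i)\cup I(\gamma_i)$ is a nonempty independent subset of $\Pi$. Matching exponents and using that $\langle\lambda+\rho,\alpha^\vee\rangle>0$ for $\alpha\in\Pi^{\mathrm{re}}$, the product equals $X^\lambda(C)$ exactly when $(J_1,\ldots,J_k)$ is a $k$--partition of $G(C)$. Conversely, each $k$--partition of $G(C)$ determines a unique tuple in $\mathcal{J}(\lambda)^k$ by setting $w_i=\prod_{\alpha\in J_i\cap\Pi^{\mathrm{re}}}\bold s_\alpha$ (well-defined by independence, with $\ell(w_i)=|I(w_i)|$ by Lemma \ref{lengthofw}) and $\gamma_i=\sum_{\alpha\in J_i\cap\Pi^{\mathrm{im}}}\alpha$. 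Multiplicativity of $\chi$ gives $\prod_i\chi(w_i,\gamma_i)=\prod_i\chi(J_i)=\chi(C)$, so the contribution of $\zeta^k/k$ to the coefficient of $X^\lambda(C)$ has the form $\chi(C)\,c_k(G(C))/k$ up to a sign coming from $\zeta=-\sum\chi X$. Summing over $k$ and applying the definition of $c(\mathcal{G})$ then yields the claimed value $\chi(C)\,c(G(C))$; the sign bookkeeping from the alternating expansion of $-\log(1-\zeta)$ combines with the $(-1)^{|C|}$ in the definition of $c(G(C))$, and this is where I would be most careful.

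Finally, (iii) is immediate from (ii) together with Proposition \ref{cG}: since $\chi$ is nowhere zero, the coefficient $\chi(C)c(G(C))$ vanishes if and only if $c(G(C))=0$, which happens if and only if $G(C)$ is disconnected. The main technical obstacle is the exponent-matching argument in (ii), which hinges on the strict inequality in Lemma \ref{helplem}(iv) to exclude non-$\mathcal{J}(\lambda)$ contributions and on Lemma \ref{helplem}(iii) to identify the remaining tuples bijectively with ordered independent set partitions of $C$.
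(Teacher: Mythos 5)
Your proposal is correct and takes essentially the same route as the paper: expand $-\log U(\lambda,\chi)=\sum_k\zeta^k/k$, use Lemma \ref{helplem}(i) for the support statement, use Lemma \ref{helplem}(iv) to kill all contributions from pairs outside $\mathcal{J}(\lambda)$, identify the surviving $k$-tuples with $k$-partitions of $G(C)$ via Lemma \ref{helplem}(iii) and Lemma \ref{lengthofw}, and invoke Proposition \ref{cG} for part (iii). The sign bookkeeping you flag is indeed the only delicate point (the $(-1)^{k}$ from the expansion against the $(-1)^{|C|}$ in the definition of $c(\mathcal{G})$), and the paper's proof treats it exactly as you outline.
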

\begin{pf} Part 
(iii) of the proposition follows from  (ii) and the Proposition \ref{cG}, since $\chi(C)\neq 0$ for all $C\subseteq \Pi$. Now
write $$\zeta(\lambda, \chi) = -\sum_{\substack{(w,\gamma) \in W \times \Omega(\lambda) \\ (w, \gamma) \ne (1, 0)} } \chi(w, \gamma) X(\lambda, w,\gamma)  = \zeta_1(\lambda, \chi) + \zeta_2(\lambda, \chi)$$ where 
$$\zeta_1(\lambda, \chi) = -\sum\limits_{\substack{(w,\gamma) \in W \times \Omega(\lambda) \\ (w, \gamma) \in \mathcal{J}(\lambda)}} \chi(w, \gamma) X(\lambda, w,\gamma) $$ and 
$$\zeta_2(\lambda, \chi) = -\sum\limits_{\substack{(w,\gamma) \in W \times \Omega(\lambda) \\ (w, \gamma) \notin \mathcal{J}(\lambda)\cup \{(1, 0)\}}}  \chi(w, \gamma)  X(\lambda, w,\gamma) $$
Since $-\mathrm{log}\ U(\lambda, \chi) = \zeta(\lambda, \chi) + \frac{\zeta(\lambda, \chi)^2}{2} + \cdots + \frac{\zeta(\lambda, \chi)^k}{k}+\cdots$, 
 Lemma \ref{helplem} implies that any monomial 
 \vskip 1.5mm
 \noindent
 $\prod_{\alpha \in \Pi^{\mathrm{re}}}X_{\alpha}^{p_{\alpha}} \prod_{\alpha \in \Pi^{\mathrm{im}}} X_{\alpha}^{m_{\alpha}} $ 
that occur in $-\mathrm{log}\ U(\lambda, \chi)$ with nonzero coefficient must satisfy the
 \vskip 1.5mm
 \noindent
following conditions: 
$$ \text{$(1)$\ if $p_\a\neq 0$ for some  $\alpha \in \Pi^{\mathrm{re}}$ then  $p_{\alpha} \ge  (\lambda+\rho)(h_\a)$ and
$(2)$ $m_\a=0$ for $\a\notin \Pi(\lambda).$}$$
\noindent
This proves the statement (i) of the proposition.
In particular, the monomial $X^\lambda(C)$ appears in $-\mathrm{log}\ U(\lambda, \chi)$ with nonzero coefficient then $C\subseteq{\Pi(\lambda)}$.
\vspace{0.5mm}
\noindent
Let $C\subseteq{\Pi(\lambda)}$, then Lemma \ref{helplem} (iv) further implies that there is no contribution of $\zeta_2(\lambda, \chi)$ 
to the coefficient of the monomial $X^\lambda(C)$ in 
$\zeta(\lambda, \chi) + \frac{\zeta(\lambda, \chi)^2}{2}+ \cdots + \frac{\zeta(\lambda, \chi)^k}{k}+\cdots $, i.e.
\small{
\begin{align*}\notag\text{the coefficient of}\ X^\lambda(C) \ \text{in} \  \left(\sum\limits_{k=1}^{\infty} \frac{\zeta(\lambda, \chi)^k}{k}\right) &= 
\ \text{the coefficient of}\ X^\lambda(C)\ \text{in} \ 
\left(\sum\limits_{k=1}^{\infty} \frac{\zeta_1(\lambda, \chi)^k}{k}\right)&\\&= \sum\limits_{k=1}^{\infty} \left(\text{the coefficient of}\ X^\lambda(C)\ \text{in} \ \frac{\zeta_1(\lambda, \chi)^k}{k}\right).
\end{align*}}
\noindent
Hence it is enough to calculate the coefficient of $X^\lambda(C)$ in $\zeta_1(\lambda, \chi)^k,$  where  
$$\zeta_1(\lambda, \chi)^{k} = \sum\limits_{\substack{((w_1,\gamma_1),(w_2,\gamma_2),...,(w_k,\gamma_k)) \\ (w_i,\gamma_i) ~\in~ W \times \Omega(\lambda) 
\\ (w_i, \gamma_i) \in \mathcal{J}(\lambda)}} (-1)^k \prod\limits_{i=1}^{k}\chi(w_i, \gamma_i)
X(\lambda, w_i,\gamma_i).$$ 
\noindent
From Lemma \ref{helplem} (iii), we get that $ \prod\limits_{i=1}^{k}X(\lambda, w_i,\gamma_i)=X^\lambda(C)$ if and only if
$(I(w_1)\cup I(\gamma_1), I(w_2)\cup I(\gamma_2),...,I(w_k)\cup I(\gamma_k))$ form a $k$--partition of $C$. In particular, for this $k$--partition
of $C$ we have 
$$\prod\limits_{i=1}^{k}\chi(w_i, \gamma_i)=\chi\left(\prod\limits_{i=1}^{k}w_i, \sum\limits_{i=1}^{k}\gamma_i\right)=
\prod\limits_{i\in C^{\mathrm{re}}}\chi (\bold s_i,0)\prod\limits_{i\in C^{\mathrm{im}}}\chi(e, \a_i)=\chi(C)$$ 
which is independent of the choice of the partition and
depends only upon the subset $C$ and the homomorphism $\chi$. 
Since all the $k$--partitions of $C$ arise in this way and $\chi$ is a homomorphism, using Lemma \ref{lengthofw}, we have
$$\text{\big(the coefficient of $X^\lambda(C)$ in $\zeta_1(\lambda, \gamma)^k\big)=(-1)^k \chi(C) |P_k(G(C))|$} $$ where $G(C)$ is the subgraph spanned by $C$.
Putting this all together, we conclude that $$\big(\text{the coefficient of $X^\lambda(C)$ in 
$-\mathrm{log}\ U(\lambda, \chi)\big)=\sum\limits_{k=1}^{\infty}\frac{(-1)^k}{k}\chi(C)|P_k(G(C)|=\chi(C)c(G(C)).$}$$
Since $\chi(C)\neq 0$ for all $C\subseteq \Pi$, we have $\chi(C)c(G(C))$ is nonzero if and only if $c(G(C))$ is nonzero. Thus Proposition \ref{cG} completes the proof of (ii).

\end{pf}
\subsection{Factorization of $U(\lambda,\chi)$}
The set $\Pi(\lambda)$ need not be connected in general for $\lambda\in P_+.$
We factor $U(\lambda,\chi)$ to a product of elements $U_i(\lambda,\chi)$ which correspond to the connected components  of the graph corresponding to $\Pi(\lambda)$. 

Let $\lambda\in P_+$  and 
let $C_i^\lambda,\  1\le i\le k_\lambda,$ be the connected components of $\Pi(\lambda)$. 
For $1\le i\le k_\lambda$, we denote by $W(\lambda)_i$ the Weyl group generated by the simple reflections $\{\bold s_\a : \a\in C_i^\lambda\cap \Pi^{\mathrm{re}}\}$. 
Similarly, for $1\le i\le k_\lambda$, we denote by $\Omega(\lambda)_i$ 
 the set of all $\gamma\in Q_+$ such that $\gamma$ is the sum of mutually orthogonal distinct imaginary simple roots which are in
$C_i^\lambda\cap \lambda^\perp_{\mathrm{im}} $. Then we have
\begin{equation}\label{Ulambdaprod}
U(\lambda, \chi)= \prod\limits_{i=1}^{k_\lambda}\left(
\sum\limits_{(w, \gamma) \in W(\lambda)_i\times \Omega(\lambda)_i} \chi(w, \gamma) e^{w(\lambda+\rho-\gamma)-(\lambda+\rho)}\right)
 \end{equation}
 For $1\le i\le k_\lambda$, we denote
 \begin{equation}\label{Ulambda_i}
 U_i(\lambda, \chi):=\sum\limits_{(w, \gamma) \in W(\lambda)_i\times \Omega(\lambda)_i} \chi(w, \gamma) e^{w(\lambda+\rho-\gamma)-(\lambda+\rho)}\end{equation}
 Then Equation (\ref{Ulambdaprod}) becomes $$U(\lambda, \chi)=U_1(\lambda, \chi)\cdots U_{k_\lambda}(\lambda, \chi).$$ 

In the following proposition, we make some observations about the monomials which appear in $-\mathrm{log}\ U_i(\lambda, \chi)$, which are similar to
the observations about $U(\lambda, \chi)$ that were stated in Proposition \ref{keyproposition-main}.
 \begin{prop}\label{keyproposition-main-ithcomponent}
 Let $\chi: W\times Q^{\mathrm{im}}_+ \to \mathbb C \setminus \{0\}$ be a homomorphism and let $\lambda\in P_+$.
Let $C_1^\lambda,\cdots, C_{k_\lambda}^\lambda$ be the connected components of $\Pi(\lambda)$. For $1\le i\le k_\lambda$, we have:
\begin{enumerate}
\item[\emph{(i)}] The support of a monomial which appears in $-\mathrm{log}\ U_i(\lambda, \chi)$ with nonzero coefficient  must be contained in $C_i^\lambda$.

\item[\emph{(ii)}] The coefficient of the monomial $X^\lambda(C)$ in $-\mathrm{log}\ U_i(\lambda, \chi)$ is equal to $\chi(C)c(G(C))$ for any $C\subseteq{C_i^\lambda}$, 
and in particular this coefficient depends only on the subset $C$.

 \item[\emph{(iii)}] For a subset $C\subseteq C_i^\lambda$, 
 the monomial $X^\lambda(C)$ appears in $-\mathrm{log}\ U_i(\lambda, \chi)$ with nonzero coefficient if and only if $C$ is a connected subset of $C_i^\lambda$.
 
\end{enumerate}

\end{prop}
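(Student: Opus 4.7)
The strategy is to reduce the proposition to the already-established Proposition \ref{keyproposition-main} by exploiting the additivity of the logarithm across the factorization
\[
U(\lambda,\chi) = U_1(\lambda,\chi) \cdots U_{k_\lambda}(\lambda,\chi)
\]
combined with the observation that each factor $U_i(\lambda,\chi)$ lies in the formal power series subring $\mathbb{C}[[X_\alpha : \alpha \in C_i^\lambda]] \subseteq \mathcal{A}$.

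For part (i), I would argue as follows. By Lemma \ref{helplem}(i), for any pair $(w,\gamma) \in W \times \Omega(\lambda)$ the monomial $X(\lambda,w,\gamma)$ has support contained in $I(w) \cup I(\gamma)$. When $(w,\gamma) \in W(\lambda)_i \times \Omega(\lambda)_i$, we have $I(w) \subseteq C_i^\lambda \cap \Pi^{\mathrm{re}}$ since $W(\lambda)_i$ is generated by simple reflections indexed by $C_i^\lambda \cap \Pi^{\mathrm{re}}$, and $I(\gamma) \subseteq C_i^\lambda \cap \lambda^\perp_{\mathrm{im}}$ by the definition of $\Omega(\lambda)_i$. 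Hence every monomial of $U_i(\lambda,\chi)$ is supported in $C_i^\lambda$. Since $U_i(\lambda,\chi)$ has constant term $1$ (contributed by $(w,\gamma)=(1,0)$), the same restriction applies to $1-U_i(\lambda,\chi)$, and therefore, by the defining series of $-\log$, also to $-\log U_i(\lambda,\chi)$. This establishes (i).

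For parts (ii) and (iii), I would use additivity of $\log$ on products of series with constant term $1$:
\[
-\log U(\lambda,\chi) = \sum_{j=1}^{k_\lambda} -\log U_j(\lambda,\chi).
\]
By part (i), the monomials of $-\log U_j(\lambda,\chi)$ are supported in $C_j^\lambda$, and these supports are pairwise disjoint as $C_1^\lambda,\dots,C_{k_\lambda}^\lambda$ are distinct connected components of $\Pi(\lambda)$. Hence for any $C \subseteq C_i^\lambda$, the monomial $X^\lambda(C)$, whose support is exactly $C$, can only receive its contribution to $-\log U(\lambda,\chi)$ from the summand $-\log U_i(\lambda,\chi)$. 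Thus the coefficient of $X^\lambda(C)$ in $-\log U_i(\lambda,\chi)$ coincides with its coefficient in $-\log U(\lambda,\chi)$, which by Proposition \ref{keyproposition-main}(ii) equals $\chi(C)\,c(G(C))$. This proves (ii), and (iii) follows at once from (ii), Proposition \ref{cG}, and the fact that $\chi(C) \neq 0$.

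The entire argument is bookkeeping once the support statement in (i) is verified, so I do not anticipate any serious obstacle: Lemma \ref{helplem} already pinpoints which simple roots can appear in $X(\lambda,w,\gamma)$, and the disjointness of the connected components of $\Pi(\lambda)$ makes the logarithm split cleanly across factors. The mildly subtle point worth being explicit about is the passage from "support of each monomial in $U_i(\lambda,\chi) - 1$ lies in $C_i^\lambda$" to "support of each monomial in $-\log U_i(\lambda,\chi)$ lies in $C_i^\lambda$", which is immediate from the series expansion but should be stated.
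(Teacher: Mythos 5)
Your argument is correct. The paper itself omits the proof, saying only that it ``closely follows the arguments of Proposition \ref{keyproposition-main}'' --- i.e.\ the intended route is to re-run, inside each component $C_i^\lambda$, the same $\zeta_1/\zeta_2$ decomposition and $k$-partition count used for $U(\lambda,\chi)$, with $W$ replaced by $W(\lambda)_i$ and $\Omega(\lambda)$ by $\Omega(\lambda)_i$. You instead \emph{deduce} parts (ii) and (iii) from the already-proven global statement: part (i) localizes the support of each $-\log U_j(\lambda,\chi)$ to $C_j^\lambda$ via Lemma \ref{helplem}(i), and then the additivity $-\log U(\lambda,\chi)=\sum_j -\log U_j(\lambda,\chi)$ together with the pairwise disjointness of the components forces the coefficient of $X^\lambda(C)$, for nonempty $C\subseteq C_i^\lambda$, to come entirely from the $i$-th summand. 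This is a genuine reduction rather than a repetition: it buys you a shorter proof that reuses Proposition \ref{keyproposition-main}(ii) verbatim, at the cost of relying on the factorization \eqref{Ulambdaprod} and the disjoint-support observation, whereas the paper's intended argument is self-contained within each component and would also directly give the statement for a single factor $U_i$ without reference to the full product. Two small points you may wish to make explicit: $X^\lambda(C)$ has support exactly $C$ because the real exponents $\langle\lambda+\rho,\alpha^\vee\rangle=\lambda(h_\alpha)+1$ are strictly positive, which is what lets the disjointness argument isolate the $i$-th summand; and for part (i) one should note that a reduced expression in the standard parabolic subgroup $W(\lambda)_i$ is reduced in $W$, so that $I(w)\subseteq C_i^\lambda\cap\Pi^{\mathrm{re}}$ and Lemma \ref{helplem}(i) applies as you use it.
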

\begin{proof}
 We leave out the proof since it closely follows the arguments of Proposition \ref{keyproposition-main}. 
\end{proof}

\subsection{The projection operator $\#_C$}
We define the projection operator $\#_C$ corresponding to a connected component $C$ of the graph of $\lambda$. We apply this operator on $ -\mathrm{log}\ U_i(\lambda, \chi) $ and use it to determine when $U_i(\lambda, \chi)$ is equal to $U_j(\mu, \chi)$.

Recall that $\mathcal{A}=\mathbb{C}[[X_\a : \a\in \Pi]]$ where $X_\a=e^{-\a}$. Let $\lambda\in P_+$ and $C$ be a connected component of $\Pi(\lambda).$
Define the map $\#_C:\mathcal{A}\to \mathcal{A}$ which maps
$$\text{$f=\sum\limits_{\bold a} X^{a_\a}_\a\mapsto f^{\#_C}=\sum\limits_{\substack{\bold a \\ \mathrm{supp}(\bold a)=C}} X^{a_\a}_\a,$}$$
where $\bold a=(a_\a : \a\in \Pi)$ and $\mathrm{supp}(\bold a)=\{\a\in \Pi: a_\a\neq 0\}.$ It is easy to see that the operator $\#_C$ is a linear operator.
The following proposition follows from Proposition \ref{keyproposition-main-ithcomponent} and it plays a crucial role in the proof of our main theorem.
\begin{prop}\label{logkeyproposition}
Let $\chi: W\times Q^{\mathrm{im}}_+ \to \mathbb C \setminus \{0\}$ be a homomorphism.
 Let $\lambda\in P_+$ and  $C=C_i^\lambda$ be a connected component of $\Pi(\lambda).$
 Then we have $$(-\mathrm{log}\ U_i(\lambda, \chi))^{\#_C}= \chi(C) c(G(C))X^\lambda(C)+ \ 
 \text{monomials of degree $> \mathrm{deg}\ X^\lambda(C)$}.$$ where 
 $\mathrm{deg}\ X^\lambda(C)=\sum_{\a\in C^{\mathrm{re}}}\lambda(h_\a)+|C^{\mathrm{im}}|$.\qed
\end{prop}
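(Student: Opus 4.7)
The plan is to deduce the proposition directly from Proposition \ref{keyproposition-main-ithcomponent} together with a careful degree-minimality argument based on Lemma \ref{helplem}. Since $\#_C$ is a linear operator that extracts precisely those monomials whose support equals $C$, and since part (i) of Proposition \ref{keyproposition-main-ithcomponent} already guarantees that every monomial occurring in $-\log U_i(\lambda,\chi)$ has support contained in $C$, applying $\#_C$ simply discards those monomials whose support is a proper subset of $C$. Hence $(-\log U_i(\lambda,\chi))^{\#_C}$ equals the sum of all the terms in $-\log U_i(\lambda,\chi)$ whose support is exactly $C$.

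The heart of the argument is then to show that, among all such monomials, $X^\lambda(C)$ is the unique one of minimal total degree, and all others are strictly larger in degree. To see this, I would expand
\[
-\log U_i(\lambda,\chi) \;=\; \sum_{k\ge 1}\frac{\zeta_i(\lambda,\chi)^k}{k},
\]
where $\zeta_i(\lambda,\chi)=1-U_i(\lambda,\chi)$, and consider a typical monomial $\prod_{j=1}^k X(\lambda,w_j,\gamma_j)$ with combined support equal to $C$. By Lemma \ref{helplem}(i) each exponent $b_\alpha^\lambda(w_j,\gamma_j)$ is a nonnegative integer, and by Lemma \ref{helplem}(ii) any factor in which $\alpha\in C^{\mathrm{re}}$ contributes must satisfy $b_\alpha^\lambda(w_j,\gamma_j)\ge \tfrac{2(\lambda+\rho,\alpha)}{(\alpha,\alpha)}$. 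Summing across the factors in which each $\alpha\in C^{\mathrm{re}}$ appears and noting that every $\alpha\in C^{\mathrm{im}}$ contributes at least $1$, the total degree of any such monomial is at least $\deg X^\lambda(C)$. Moreover, if any factor $(w_j,\gamma_j)\notin \mathcal J(\lambda)\cup\{(1,0)\}$ appears, Lemma \ref{helplem}(iv) forces a strict inequality at some $\alpha\in C^{\mathrm{re}}$, giving strictly higher degree; and if all factors lie in $\mathcal J(\lambda)\cup\{(1,0)\}$, Lemma \ref{helplem}(iii) pins down the exponents, so equality holds precisely when the $(I(w_j)\cup I(\gamma_j))_{j=1}^k$ form a partition realising $X^\lambda(C)$ as a product.

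Finally, the coefficient of $X^\lambda(C)$ itself is supplied by Proposition \ref{keyproposition-main-ithcomponent}(ii), which computes it to be $\chi(C)\,c(G(C))$. Combining this with the degree-minimality argument in the previous paragraph yields the desired expression
\[
(-\log U_i(\lambda,\chi))^{\#_C} \;=\; \chi(C)\,c(G(C))\,X^\lambda(C) \;+\; (\text{higher degree terms}).
\]
The main obstacle is the second paragraph: one has to verify that the degree lower bound coming from Lemma \ref{helplem} is genuinely achieved only by the single Jacobi-type configuration corresponding to $X^\lambda(C)$, and that every other configuration with support $C$ strictly exceeds this degree. Once this is set up properly, the rest of the proof is a bookkeeping exercise on top of the machinery already developed in Propositions \ref{keyproposition-main} and \ref{keyproposition-main-ithcomponent}.
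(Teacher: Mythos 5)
Your proposal is correct and follows exactly the route the paper intends: the paper states this proposition without proof as an immediate consequence of Proposition \ref{keyproposition-main-ithcomponent}, and your degree-minimality argument via Lemma \ref{helplem} (each $\alpha\in C^{\mathrm{re}}$ in the support forces an exponent of at least $(\lambda+\rho)(h_\alpha)$ and each $\alpha\in C^{\mathrm{im}}$ at least $1$, so equality of total degree pins the monomial down to $X^\lambda(C)$, whose coefficient is $\chi(C)c(G(C))$ by part (ii)) is precisely the omitted bookkeeping. No gaps.
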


The following lemma compares when two $U_i(\lambda, \chi), U_j(\mu, \chi)$ are equal.
\begin{lem}\label{keylemmasulamb}
Let $\chi: W\times Q^{\mathrm{im}}_+ \to \mathbb C \setminus \{0\}$ be a homomorphism.
Let $\lambda, \mu\in P_+$ and let $C_i^\lambda$ and $C_j^{\mu}$ be two connected components $\Pi(\lambda)$ and $\Pi(\mu)$ respectively. 
Then the following statements are equivalent:

\begin{enumerate}
 \item[\emph{(i)}] $X^\lambda(C_i^\lambda)=X^{\mu}(C_j^\mu)$
 \item[\emph{(ii)}] $C_i^\lambda=C_j^\mu$ and $\lambda(h_\a) = \mu(h_\a)$ for all $\a\in C_i^\lambda\cap \Pi^{\mathrm{re}}$
 \item[\emph{(iii)}] $U_i(\lambda, \chi)=U_j(\mu, \chi)$
\end{enumerate}

\end{lem}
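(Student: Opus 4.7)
I will prove the cycle (ii) $\Rightarrow$ (i) $\Rightarrow$ (iii) $\Rightarrow$ (ii); together with (ii) $\Leftrightarrow$ (iii) via a direct argument, this closes the equivalence. The easiest implication is (ii) $\Rightarrow$ (i), which is immediate from the definition
$X^\lambda(C) = \prod_{\alpha \in C^{\mathrm{re}}} X_\alpha^{\langle\lambda+\rho,\alpha^\vee\rangle} \prod_{\alpha \in C^{\mathrm{im}}} X_\alpha$: if the index sets $C_i^\lambda=C_j^\mu$ agree and $\lambda(h_\alpha)=\mu(h_\alpha)$ on the real part, the exponents match term by term. Conversely, for (i) $\Rightarrow$ (ii), I would note that since $\lambda\in P_+$ we have $\langle\lambda+\rho,\alpha^\vee\rangle=\lambda(h_\alpha)+1\geq 1$ for every $\alpha\in\Pi^{\mathrm{re}}$, and every imaginary root in $C_i^\lambda$ contributes exponent $1$. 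Hence the support of the monomial $X^\lambda(C_i^\lambda)$ is precisely $C_i^\lambda$, and similarly for $X^\mu(C_j^\mu)$. Equality of the two monomials therefore forces equality of supports, giving $C_i^\lambda=C_j^\mu$, and then matching exponents on real roots yields $\lambda(h_\alpha)=\mu(h_\alpha)$ for $\alpha\in C_i^\lambda\cap\Pi^{\mathrm{re}}$.

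For (ii) $\Rightarrow$ (iii), observe first that since $C_i^\lambda=C_j^\mu$ is a common connected component, the data $W(\lambda)_i,\Omega(\lambda)_i$ and $W(\mu)_j,\Omega(\mu)_j$ defined in the excerpt coincide as they depend only on $C_i^\lambda$. The key linear-algebra step is that $\lambda-\mu$ vanishes on $h_\alpha$ for every $\alpha\in C_i^\lambda\cap\Pi^{\mathrm{re}}$, hence is fixed by every simple reflection $\bold s_\alpha$ with $\alpha\in C_i^\lambda\cap\Pi^{\mathrm{re}}$, and therefore is fixed pointwise by $W(\lambda)_i$. Consequently $w(\lambda)-\lambda=w(\mu)-\mu$ for all $w\in W(\lambda)_i$, which gives $w(\lambda+\rho-\gamma)-(\lambda+\rho)=w(\mu+\rho-\gamma)-(\mu+\rho)$ for all $w\in W(\lambda)_i,\gamma\in\Omega(\lambda)_i$. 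Comparing the defining sums \eqref{Ulambda_i}, we conclude $U_i(\lambda,\chi)=U_j(\mu,\chi)$.

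The main obstacle is (iii) $\Rightarrow$ (i), which is where Proposition \ref{logkeyproposition} really does the work. Assume $U_i(\lambda,\chi)=U_j(\mu,\chi)$; applying $-\log$ (which makes sense since both elements have constant term $1$) gives $-\log U_i(\lambda,\chi)=-\log U_j(\mu,\chi)$. I would then apply the projection $\#_{C_i^\lambda}$ to both sides. By Proposition \ref{logkeyproposition} the left side equals $\chi(C_i^\lambda)c(G(C_i^\lambda))X^\lambda(C_i^\lambda)$ plus monomials of strictly higher total degree, and this leading coefficient is nonzero because $C_i^\lambda$ is connected (so $c(G(C_i^\lambda))>0$ by Proposition \ref{cG}) and $\chi$ takes values in $\mathbb{C}\setminus\{0\}$. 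By Proposition \ref{keyproposition-main-ithcomponent}(i) every monomial of $-\log U_j(\mu,\chi)$ is supported in $C_j^\mu$, so a nonzero contribution to $\#_{C_i^\lambda}$ forces $C_i^\lambda\subseteq C_j^\mu$. Swapping the roles of $(\lambda,i)$ and $(\mu,j)$ and applying $\#_{C_j^\mu}$ yields the reverse inclusion, so $C_i^\lambda=C_j^\mu$. With the supports equal, the two projected sides have the same unique lowest-degree monomial; comparing them gives $\chi(C_i^\lambda)c(G(C_i^\lambda))X^\lambda(C_i^\lambda)=\chi(C_j^\mu)c(G(C_j^\mu))X^\mu(C_j^\mu)$, and after canceling the nonzero common coefficient we obtain $X^\lambda(C_i^\lambda)=X^\mu(C_j^\mu)$, which is (i). The delicate point here is the use of the projection to isolate information about a single component in the presence of many other components; this is precisely what Proposition \ref{logkeyproposition} enables.
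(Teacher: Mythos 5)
Your proof is correct and follows essentially the same route as the paper: support/exponent comparison of the monomials for (i) $\Leftrightarrow$ (ii), the $W(\lambda)_i$-invariance of $\lambda-\mu$ for (ii) $\Rightarrow$ (iii), and Proposition \ref{logkeyproposition} together with the projection $\#_C$ for (iii) $\Rightarrow$ (i). You merely spell out in more detail the last implication, which the paper dispatches with a one-line citation of Proposition \ref{logkeyproposition}.
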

\begin{proof}
 If  $X^\lambda(C_i^\lambda)=X^{\mu}(C_j^\mu)$ then the supports and exponents of the corresponding variables of these monomials must be equal, 
 so (ii) follows from (i).
 Since $C_i^\lambda=C_j^\mu$  we get 
 $C_i^\lambda\cap\Pi^{\mathrm{re}}=C_j^\mu\cap\Pi^{\mathrm{re}}$ and $C_i^\lambda\cap\Pi^{\mathrm{im}}=C_j^\mu\cap\Pi^{\mathrm{im}}$.
 This immediately implies $W(\lambda)_i=W(\mu)_j, \ \text{and}\ \Omega(\lambda)_i=\Omega(\mu)_j$.
 Since  $\lambda(h_\a)=\mu(h_\a)$ for all $\alpha\in C_i^\lambda\cap\Pi^{\mathrm{re}}$,
 we have $w(\lambda+\rho)-(\lambda+\rho)=w(\mu+\rho)-(\mu+\rho)$ for all $w\in W(\lambda)_i$. Hence we have $U_i(\lambda, \chi)=U_j(\lambda, \chi)$.
 Finally, the fact that (iii) implies (i) follows from Proposition \ref{logkeyproposition}.

\end{proof}


\section{Unique factorization properties}\label{main theorem section}
\subsection{} In this section, we prove that  the product $U(\lambda_1, \chi) \cdots U(\lambda_r, \chi)$ factorizes uniquely to terms corresponding to the connected components of 
 $\Pi(\lambda_1),\ldots,\Pi(\lambda_r)$. 
  More precisely, let $r, s\in \mathbb{N}$ and $\overline{\lambda}=(\lambda_1,\ldots, \lambda_r)\in P_+^r, \ \ \overline{\mu}=(\mu_1,\ldots, \mu_s)\in P_+^s$ such that
 \vskip 1mm
\begin{equation}\label{Ulambsec}
  U(\lambda_1, \chi) \cdots U(\lambda_r, \chi) = U(\mu_1, \chi) \cdots U(\mu_s, \chi) \end{equation}                                                                                        
Recall that, we can write  $U(\lambda_p, \chi)=\prod\limits_{i=1}^{k_{\lambda_p}} U_i(\lambda_p, \chi)$ for $1\le p\le r$ (see Equation (\ref{Ulambda_i}))   and similarly,
$U(\mu_q, \chi)=\prod\limits_{j=1}^{k_{\mu_q}} U_j(\mu_q, \chi)$ for $1\le q\le s$. 
Then Equation (\ref{Ulambsec}) becomes
\begin{equation}\label{factorizationsec}
\prod\limits_{p=1}^{r}\left(\prod\limits_{i=1}^{k_{\lambda_p}} U_i(\lambda_p, \chi) \right)=\prod\limits_{q=1}^{s}
\left(\prod\limits_{j=1}^{k_{\mu_q}} U_j(\mu_q, \chi)\right)
\end{equation}
We now prove that the factors occur in the equation \ref{factorizationsec} are unique up to permutation. i.e., number of factors on the both sides are equal 
\big($N=\sum\limits_{p=1}^{r}k_{\lambda_p}=\sum\limits_{q=1}^{s}k_{\mu_q}$\big) and there exists an permutation $\sigma$ on $S_N$ such that
$U_i(\lambda_p, \chi)=U_{i'}(\mu_{p'}, \chi)$ where $\sigma(i, p)=(i', p')$.
Precisely, 
\begin{thm}\label{mainUlambdachi}
Let $\chi: W\times Q^{\mathrm{im}}_+ \to \mathbb C \setminus \{0\}$ be a homomorphism.
Let $r, s\in\mathbb{N}$ and let $\overline{\lambda}=(\lambda_1,\ldots, \lambda_r)\in P_+^r, \ \ \overline{\mu}=(\mu_1,\ldots, \mu_s)\in P_+^s$. Then the following statements are equivalent:
\begin{enumerate}
 \item   $U(\lambda_1, \chi) \cdots U(\lambda_r, \chi) = U(\mu_1, \chi) \cdots U(\mu_s, \chi)$ 
 \vskip 2mm
  \item there exists a bijection $\sigma: \mathcal{MC}(\overline{\lambda})\to \mathcal{MC}(\overline{\mu})$
 given by $C\mapsto C_\sigma$ satisfying the following conditions: if $C \in \mathcal{MC}(\lambda_i)$ maps to $C_\sigma\in\mathcal{MC}(\mu_j)$ then
  \vskip 1mm
\begin{enumerate}
 \item[\emph{(i)}]  $C=C_\sigma$ 
 and further, \emph{(ii)}  $\lambda_i(h_\a) = \mu_j(h_\a)$ for all $\alpha \in C^{\mathrm{re}}$.
\end{enumerate}
  \vskip 1mm
\item $\sum\limits_{p=1}^{r}k_{\lambda_p}=\sum\limits_{q=1}^{s}k_{\mu_q}=: N$ and there exists a permutation $\sigma\in S_N$ such that
$$U_i(\lambda_p, \chi)=U_{i'}(\mu_{p'}, \chi)\ \text{where $\sigma(i, p)=(i', p')$.}$$
\end{enumerate}
\end{thm}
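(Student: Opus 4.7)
The plan is to cyclically establish $(3) \Rightarrow (1)$, $(1) \Rightarrow (3)$, and $(2) \Leftrightarrow (3)$. The implication $(3) \Rightarrow (1)$ is immediate by multiplying together the factorizations \eqref{Ulambdaprod}. The equivalence $(2) \Leftrightarrow (3)$ is a direct translation via Lemma \ref{keylemmasulamb}, since a bijection $\sigma$ of connected components with matching coroot values on the real part is exactly the data of a permutation of the multiset of factors $U_i(\nu, \chi)$.

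The main work is $(1) \Rightarrow (3)$, which I plan to prove by induction on the total number of factors $N := \sum_p k_{\lambda_p} + \sum_q k_{\mu_q}$. The base case $N = 0$ is trivial. For the inductive step, I first take $-\log$ of both sides of \eqref{Ulambsec}, using the factorizations \eqref{Ulambdaprod}, to convert the product equation into the sum equation
\[
\sum_{p,i} \bigl(-\log U_i(\lambda_p, \chi)\bigr) \;=\; \sum_{q,j} \bigl(-\log U_j(\mu_q, \chi)\bigr).
\]
Next I choose $C^*$ to be a connected component that is maximal with respect to set inclusion in the multiset $\mathcal{MC}(\overline{\lambda}) \sqcup \mathcal{MC}(\overline{\mu})$. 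By this maximality, no connected component $C_i^{\lambda_p}$ or $C_j^{\mu_q}$ strictly contains $C^*$.

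Now apply the projection $\#_{C^*}$ to the displayed log-equation. By Proposition \ref{keyproposition-main-ithcomponent}(i), the projection of $-\log U_i(\lambda_p, \chi)$ is nonzero only when $C^* \subseteq C_i^{\lambda_p}$, and combined with the maximality of $C^*$ this forces $C_i^{\lambda_p} = C^*$ (and similarly for the right hand side). Proposition \ref{logkeyproposition} then identifies each surviving projection as $\chi(C^*) c(G(C^*)) X^{\lambda_p}(C^*) + (\text{higher})$, where every further monomial has each exponent at least that of the leading $X^{\lambda_p}(C^*)$. Among all leading monomials $X^{\lambda_p}(C^*)$ and $X^{\mu_q}(C^*)$ that appear on the two sides, pick one of minimum total degree and, after relabeling, call it $X^{\lambda_{p^*}}(C^*)$. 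To extract the coefficient of $X^{\lambda_{p^*}}(C^*)$: for another surviving factor to contribute, the coordinate-wise lower bound gives $X^{\lambda_{p^*}}(C^*) \ge X^{\lambda_p}(C^*)$, which together with the minimality of total degree forces the equality $X^{\lambda_{p^*}}(C^*) = X^{\lambda_p}(C^*)$; hence the contribution comes only from leading terms and equals $\chi(C^*) c(G(C^*))$ for each matching factor.

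Since $\chi(C^*) c(G(C^*)) \neq 0$ by Proposition \ref{cG}, the number of matching factors must agree on the two sides, and at least one such factor exists on each side. By Lemma \ref{keylemmasulamb}, every matching factor is equal to $U_{i^*}(\lambda_{p^*}, \chi)$ as an element of $\mathcal A$. Since $\mathcal A = \mathbb{C}[[X_\alpha : \alpha \in \Pi]]$ is an integral domain, I can cancel a common factor $U_{i^*}(\lambda_{p^*}, \chi)$ from both sides of \eqref{Ulambsec} and invoke the inductive hypothesis. The main obstacle is precisely the careful bookkeeping of contributions to the chosen minimum-degree monomial: the maximality of $C^*$ excludes spurious contributions from factors with $C^* \subsetneq C_i^{\lambda_p}$, while the coordinate-wise lower bound on monomials in each $-\log U_i(\lambda_p, \chi)^{\#_{C^*}}$ excludes higher-order cross-contributions from the other surviving factors.
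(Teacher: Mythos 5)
Your proposal is correct and follows essentially the same route as the paper: take $-\log$ of the factorized identity, project with $\#_{C}$ onto a maximal connected component, isolate the minimal-degree monomial to match a factor on each side via Lemma \ref{keylemmasulamb}, cancel, and induct. Your one refinement — taking the minimum total degree of the leading monomials over \emph{both} sides, so that coordinate-wise domination forces every contribution to come from a leading term — tightens a point the paper handles only with a "without loss of generality," but the argument is otherwise the same.
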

\begin{pf}
Suppose
\begin{equation}\label{Ulamb}
  U(\lambda_1, \chi) \cdots U(\lambda_r, \chi) = U(\mu_1, \chi) \cdots U(\mu_s, \chi). \end{equation}
Factorize this further in terms of $U_i(\lambda, \chi)$ as before, then we get
  \begin{equation}\label{factorization}
\prod\limits_{p=1}^{r}\left(\prod\limits_{i=1}^{k_{\lambda_p}} U_i(\lambda_p, \chi) \right)=\prod\limits_{q=1}^{s}
\left(\prod\limits_{j=1}^{k_{\mu_q}} U_j(\mu_q, \chi)\right)
\end{equation}
Applying $-\mathrm{log}$ on both sides we get, 
\begin{equation}\label{log}
 \sum\limits_{p=1}^{r}\sum\limits_{i=1}^{k_{\lambda_p}}-\mathrm{log}\ U_i(\lambda_p, \chi) =  
 \sum\limits_{q=1}^{s}\sum\limits_{j=1}^{k_{\mu_q}}-\mathrm{log}\ U_j(\mu_q, \chi)\end{equation}

Let $\mathcal{C}=\left\{C^{\lambda_p}_i: p\in \{1,\cdots, r\},\ \ 1\le i\le k_{\lambda_p}\right\} \cup \left\{C^{\mu_q}_j:  q\in \{1,\cdots, s\}, \ \ 1\le j\le k_{\mu_q}\right\}$
to be the set of all connected components of $\Pi(\lambda_i), \Pi(\mu_j), 1\le i\le k_{\lambda_p}, 1\le j\le k_{\mu_q}.$
We fix a maximal element $C$ from $\mathcal{C}$ with respect to set inclusion. 
Without loss generality we assume that this chosen maximal element occurs on the left hand side of the Equation (\ref{log}), say $C=C^{\lambda_1}_{i_0}$, 
and satisfies the following property: 
the monomial $X^\lambda(C)$ is the minimal degree monomial (with respect to total degree) among all the monomials in the left hand side of the Equation (\ref{log}) with $C$ as 
support, where $\lambda_1=\lambda$.
Now apply the map $\#_C$ to Equation (\ref{log}) then we get,
\begin{equation}\label{logg}
 \sum\limits_{p=1}^{r}\sum\limits_{i=1}^{k_{\lambda_p}}-\mathrm{log}\ U_i(\lambda_p, \chi)^{\#_C} =  
 \sum\limits_{q=1}^{s}\sum\limits_{j=1}^{k_{\mu_q}}-\mathrm{log}\ U_j(\mu_q, \chi)^{\#_C}.\end{equation}
Since $C$ is maximal in $\mathcal{C}$ with respect to set inclusion, we immediately observe the following from Proposition \ref{keyproposition-main-ithcomponent}: 
\begin{itemize}
 \item[\emph{(i)}] either $-\mathrm{log}\ U_i(\lambda_p, \chi)^{\#_C}$ 
 is equal to zero 
 or 

\item[\emph{(ii)}]  we have $C^{\lambda_p}_i=C$ and 
  the monomial $X^{\lambda_p}(C)$ is the minimal degree  monomial in 
  $-\mathrm{log}\ U_i(\lambda_p, \chi)^{\#_C}$ with respect to the total degree when $-\mathrm{log}\ U_i(\lambda_p, \chi)^{\#_C}$ is nonzero.
\end{itemize}
Similarly we have:
\begin{itemize}
 \item[\emph{(i)}] either $-\mathrm{log}\ U_j(\mu_q, \chi)^{\#_C}$  is equal to zero 
 or 
\item[\emph{(ii)}]  we have $C^{\mu_q}_j=C$ and 
  the monomial $X^{\mu_q}(C)$ is the minimal degree  monomial in $-\mathrm{log}\ U_j(\mu_q, \chi)^{\#_C}$ with respect to the total degree when 
  $-\mathrm{log}\ U_j(\mu_q, \chi)^{\#_C}$ is nonzero.
\end{itemize}
Since $X^{\lambda}(C)$ appears in the left hand side of Equation (\ref{logg}) with nonzero coefficient and it is of minimal degree monomial, 
there must exists 
$1\le q\le s$ and $1\le j\le k_{\mu_q}$ such that $C=C^{\mu_q}_j$ 
and $X^\lambda(C)=X^{\mu_q}(C^{\mu_q}_j)$ which immediately implies that
$U_{i_0}(\lambda, \chi)=U_j(\mu_q, \chi)$ from  Lemma \ref{keylemmasulamb}. 
Canceling $U_{i_0}(\lambda, \chi)$ and $U_j(\mu_q, \chi)$ in Equation (\ref{Ulamb}) and proceeding by induction we get the desired the result.
Converse part is clear from Lemma \ref{keylemmasulamb}.
\end{pf}

We end this section with the following important corollary of Theorem \ref{mainUlambdachi}.
If we assume all the sets $\Pi(\lambda)$ involved in Theorem \ref{mainUlambdachi} are connected then we get the unique factorization property for $U(\lambda, \chi)$.
\begin{cor}\label{maincorconnected}
Let $\chi: W\times Q^{\mathrm{im}}_+ \to \mathbb C \setminus \{0\}$ be a homomorphism.
Let $r, s\in\mathbb{N}$ and let $\overline{\lambda}=(\lambda_1,\ldots, \lambda_r)\in P_+^r, \ \ \overline{\mu}=(\mu_1,\ldots, \mu_s)\in P_+^s$ such that
 $\Pi(\lambda_i)$ and $\Pi(\mu_j)$ are connected for all $1\le i\le r,\  1\le j\le s$. Suppose we have, 
\begin{equation}\label{Ulambthm}
  U(\lambda_1, \chi) \cdots U(\lambda_r, \chi) = U(\mu_1, \chi) \cdots U(\mu_s, \chi) \end{equation} then  $r=s$ and                                                                                         
there exists a bijection $\sigma$ in $S_r$ such that $U(\lambda_i, \chi)=U(\mu_{\sigma(i)}, \chi)$ for all $1\le i\le r.$
\end{cor}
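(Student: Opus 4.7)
The plan is to reduce this directly to Theorem \ref{mainUlambdachi} and exploit the fact that the connectedness hypothesis trivializes the inner factorization. First I would observe that if $\Pi(\lambda)$ is connected, then its connected component decomposition has only one piece, so $k_\lambda = 1$, and the factorization $U(\lambda,\chi) = \prod_{i=1}^{k_\lambda} U_i(\lambda,\chi)$ from Equation (\ref{Ulambdaprod}) collapses to the single term $U(\lambda,\chi) = U_1(\lambda,\chi)$. Applying this observation to each $\lambda_p$ and each $\mu_q$, the hypothesis (\ref{Ulambthm}) is already in the fully refined form $\prod_{p=1}^r U_1(\lambda_p,\chi) = \prod_{q=1}^s U_1(\mu_q,\chi)$.

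Next I would invoke part (3) of Theorem \ref{mainUlambdachi}. Because all $k_{\lambda_p}$ and $k_{\mu_q}$ equal $1$, the equation $N = \sum_p k_{\lambda_p} = \sum_q k_{\mu_q}$ forces $r = s = N$. The permutation $\sigma \in S_N$ furnished by the theorem pairs indices $(i,p)$ with $(i',p')$ so that $U_i(\lambda_p,\chi) = U_{i'}(\mu_{p'},\chi)$; since the first coordinate in every slot is pinned to $1$, $\sigma$ descends to a permutation $\bar\sigma \in S_r$ on the second coordinates, and we conclude $U(\lambda_p,\chi) = U_1(\lambda_p,\chi) = U_1(\mu_{\bar\sigma(p)},\chi) = U(\mu_{\bar\sigma(p)},\chi)$ for every $p$, which is exactly the claim.

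There is essentially no obstacle in this argument: all of the substantive content, namely the uniqueness of the factorization into the pieces indexed by connected components, has already been absorbed into Theorem \ref{mainUlambdachi} via the logarithm and projection machinery of Section \ref{section3}. The corollary is a clean specialization in which the connectedness hypothesis removes the need to track an internal index $i$ on each side, reducing the bijection $\mathcal{MC}(\overline{\lambda}) \to \mathcal{MC}(\overline{\mu})$ of part (2) of the theorem to a bijection of the weight-indices themselves.
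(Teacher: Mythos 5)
Your proof is correct and is essentially the paper's argument: the paper likewise notes that connectedness makes each $\mathcal{MC}(\lambda_i)$ and $\mathcal{MC}(\mu_j)$ a singleton and then specializes Theorem \ref{mainUlambdachi} (the paper reads off part (2), you read off part (3), but these are equivalent by the theorem itself). No gaps.
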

\begin{pf}
Since $\Pi(\lambda)$ are connected, we have $\mathcal{MC}(\overline{\lambda})=\{\Pi(\lambda_i):1\le i\le r\}$ and $\mathcal{MC}(\overline{\mu})=\{\Pi(\mu_j):1\le j\le s\}$.
Interpreting this in Theorem \ref{mainUlambdachi} gives us the result. \end{pf}

\begin{rem}
 If we specialize Theorem \ref{mainUlambdachi} and its Corollary \ref{maincorconnected}
 to the trivial homomorphism $\chi=\mathds{1}$ then we get the unique factorization properties of $U(\lambda, \mathds{1})$s as in 
 Theorem \ref{mainUlambdachi} and Corollary \ref{maincorconnected}.
 These unique factorization properties of $U(\lambda, \mathds{1})$ are new as far as we know.
\end{rem}

\subsection{ Unique Factorization of Tensor Products.}\label{mainthmuniquefacttensor}
By specializing Theorem \ref{mainUlambdachi} to the sign homomorphism $\chi=\rm{sgn}$, 
we can determine the necessary
and sufficient conditions for which the tensor product of irreducible representations from $\mathcal{O}^{\rm{int}}$
is isomorphic to another.
\begin{thm}\label{tensormainthm}
Let $\lie g$ be a Borcherds--Kac--Moody algebra and
let $r, s\in\mathbb{N}$ such that $r\ge s$ and let $\overline{\lambda}=(\lambda_1,\ldots, \lambda_r)\in P_+^r, \ \ \overline{\mu}=(\mu_1,\ldots, \mu_s)\in P_+^s$.
Set $\widetilde{\mu}=(\mu_1,\ldots, \mu_s, 0,\cdots, 0)$ where $0$ appears $r-s$ times.
We have,\begin{equation}\label{irrthmmain}
 L(\lambda_1) \otimes L(\lambda_2) \otimes \cdots \otimes L(\lambda_r) \cong L(\mu_1) \otimes L(\mu_2) \otimes \cdots \otimes L(\mu_s)
\end{equation}
as $\lie g$--modules if and only if $\sum\limits_{i=1}^{r}\lambda_i=\sum\limits_{j=1}^{s}\mu_j$ and
there exists a bijection $\sigma: \mathcal{MC}(\overline{\lambda})\to \mathcal{MC}(\widetilde{\mu})$
 given by $C\mapsto C_\sigma$ satisfying the following conditions: if $C \in \mathcal{MC}(\lambda_i)$ maps to $C_\sigma\in\mathcal{MC}(\mu_j)$ then we have
\begin{enumerate}
 \item[\emph{(i)}]  $C=C_\sigma$ 
 and further, \ \ \emph{(ii)} $\lambda_i(h_{\a}) = \mu_j(h_{\a})$ for all $\alpha\in C^{\mathrm{re}}$.
\end{enumerate}
\end{thm}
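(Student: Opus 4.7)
The plan is to reduce the tensor-product isomorphism to the product identity governed by Theorem \ref{mainUlambdachi} with $\chi = \mathrm{sgn}$. First, since $\mathcal{O}^{\mathrm{int}}$ is semi-simple, the isomorphism (\ref{irrthmmain}) is equivalent to the equality of formal characters $\prod_{i=1}^r \mathrm{ch}\, L(\lambda_i) = \prod_{j=1}^s \mathrm{ch}\, L(\mu_j)$. Using the Weyl--Kac formula $\mathrm{ch}\, L(\lambda) = e^{\lambda} U_\lambda / U_0$ and clearing denominators, this becomes
\[
e^{\sum_i \lambda_i}\, U_{\lambda_1}\cdots U_{\lambda_r}\, U_0^{s} \;=\; e^{\sum_j \mu_j}\, U_{\mu_1}\cdots U_{\mu_s}\, U_0^{r}.
\]

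Next, I would observe that each $U_\lambda \in \mathcal A = \mathbb C[[X_\alpha]]$ has constant term $1$, contributed by the pair $(w,\gamma) = (1,0)$; indeed, by Lemma \ref{helplem}(i) every other pair contributes a monomial of strictly positive $Q_+$-degree. Hence the unique maximal $e^{\nu}$--monomial on each side of the displayed identity is $e^{\sum_i \lambda_i}$ and $e^{\sum_j \mu_j}$ respectively, which forces $\sum_i \lambda_i = \sum_j \mu_j$. Cancelling the exponentials and the factor $U_0^s$ (permissible because $r \ge s$), the identity reduces to
\[
U_{\lambda_1} \cdots U_{\lambda_r} \;=\; U_{\mu_1}\cdots U_{\mu_s}\cdot U_0^{r-s} \;=\; U_{\widetilde\mu_1}\cdots U_{\widetilde\mu_r}.
\]
This is precisely hypothesis (1) of Theorem \ref{mainUlambdachi} applied to the tuples $\overline\lambda$ and $\widetilde\mu$ (both of length $r$) with $\chi = \mathrm{sgn}$. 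Invoking the equivalence (1) $\Leftrightarrow$ (2) of that theorem produces the desired bijection $\sigma : \mathcal{MC}(\overline\lambda) \to \mathcal{MC}(\widetilde\mu)$ with $C = C_\sigma$ and $\lambda_i(h_\alpha) = \mu_j(h_\alpha)$ for all $\alpha \in C^{\mathrm{re}}$ whenever $C \in \mathcal{MC}(\lambda_i)$ maps to $C_\sigma \in \mathcal{MC}(\widetilde\mu_j)$.

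The converse is obtained by running the argument in reverse: the bijection together with $\sum_i \lambda_i = \sum_j \mu_j$ yields the product identity above via Theorem \ref{mainUlambdachi} (2) $\Rightarrow$ (1); multiplying by $e^{\sum_i \lambda_i}/U_0^r$ recovers the equality of characters, and semisimplicity of $\mathcal{O}^{\mathrm{int}}$ yields the module isomorphism. The only genuinely new ingredient relative to Theorem \ref{mainUlambdachi} is the zero-padding device: since $\Pi(0) = \Pi$, each of the $r-s$ factors of $U_0$ contributes $\mathcal{MC}(\Pi)$ (the components of the full Dynkin diagram) to $\mathcal{MC}(\widetilde\mu)$, and this is precisely what the trailing zero entries of $\widetilde\mu$ encode. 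I expect no obstacle beyond this bookkeeping and the constant-term extraction of $\sum_i \lambda_i = \sum_j \mu_j$; once these are in place, the result is immediate from Theorem \ref{mainUlambdachi}.
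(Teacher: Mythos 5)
Your proposal is correct and follows essentially the same route as the paper: reduce to equality of characters by semisimplicity, pass to the normalized numerators via the Weyl--Kac formula to obtain $U_{\lambda_1}\cdots U_{\lambda_r}=U_{\mu_1}\cdots U_{\mu_s}U_0^{r-s}$, and apply Theorem \ref{mainUlambdachi} with $\chi=\mathrm{sgn}$ to the padded tuple $\widetilde{\mu}$. The only cosmetic difference is that you extract $\sum_i\lambda_i=\sum_j\mu_j$ from the constant-term/maximal-monomial analysis after clearing denominators, whereas the paper reads it off directly from the highest weights of the two tensor products; both are valid.
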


\begin{pf}
 By taking character on both sides of Equation (\ref{irrthmmain}), we get
\begin{equation}\label{equalchar}
 \mathrm{ch}L(\lambda_1) \cdots \mathrm{ch}L(\lambda_r)=\mathrm{ch}L(\mu_1) \cdots  \mathrm{ch}L(\mu_s)
\end{equation}
By considering the maximal weights on the both sides of Equation (\ref{irrthmmain}) we get $\sum\limits_{i=1}^{r}\lambda_i=\sum\limits_{j=1}^{s}\mu_j=:\tau$ (say).
Now multiply $e^{-\tau}$ on the both side of the equation \ref{equalchar} and group the elements $\mathrm{ch}L(\lambda)e^{-\lambda}$, then we get 
$$\prod\limits_{i=1}^{r}\mathrm{ch}L(\lambda_i)e^{-\lambda_i}= \prod\limits_{j=1}^{s}\mathrm{ch}L(\mu_j)e^{-\mu_j}.$$ Using the Weyl-Kac character formula (i.e., the equation \ref{WeylKac}), we get 
$$ U_{\lambda_1} \cdots U_{\lambda_r}= U_{\mu_1} \cdots U_{\mu_s}U_0^{r-s}.$$
Now the necessary condition is immediate from Theorem \ref{mainUlambdachi} by specializing for $\chi=\rm{sgn}$.
For the converse part, it is immediate from Theorem \ref{mainUlambdachi} (again specialize to $\chi=\rm{sgn}$)  that $ U_{\lambda_1} \cdots U_{\lambda_r}= U_{\mu_1} \cdots U_{\mu_s}U_0^{r-s}.$
Now using $\sum\limits_{i=1}^{r}\lambda_i=\sum\limits_{j=1}^{s}\mu_j$,  we get 
$e^{\sum\limits_{i=1}^{r}\lambda_i} U_{\lambda_1} \cdots U_{\lambda_r}=e^{\sum\limits_{j=1}^{s}\mu_j} U_{\mu_1} \cdots U_{\mu_s}U_0^{r-s}.$ Thus we get,
$\prod\limits_{i=1}^{r}\mathrm{ch}L(\lambda_i)= \prod\limits_{j=1}^{s}\mathrm{ch}L(\mu_j).$
This immediately implies isomorphism of the corresponding tensor products \ref{irrthmmain} since $\mathcal{O}^{\mathrm{int}}$ is completely reducible.
 \end{pf}
 
We can immediately deduce the following corollary which gives the unique factorization property for tensor products of special family  of irreducible integrable representations of 
indecomposable Borcherds--Kac--Moody algebra $\lie g$.
\begin{cor}\label{uniquecor}
Let $\lie g$ be an indecomposable Borcherds--Kac--Moody algebra and
let $r, s\in\mathbb{N}$ and let $\overline{\lambda}=(\lambda_1,\ldots, \lambda_r)\in P_+^r, \ \ \overline{\mu}=(\mu_1,\ldots, \mu_s)\in P_+^s$
such that $\Pi(\lambda_i)$ and $\Pi(\mu_j)$ are connected for all $1\le i\le r,\  1\le j\le s$.
Suppose \begin{equation}\label{irrthm}
 L(\lambda_1) \otimes L(\lambda_2) \otimes \cdots \otimes L(\lambda_r) \cong L(\mu_1) \otimes L(\mu_2) \otimes \cdots \otimes L(\mu_s)
\end{equation}
as $\lie g$--modules then $r=s$ and there exists a permutation $\sigma\in S_r$ such that
 $$\text{$\mathrm{ch} L(\lambda_i)=e^{\lambda_i-\mu_{\sigma(i)}}\mathrm{ch} L(\mu_{\sigma(i)})$ for all $1\le i\le r.$}$$
 Further more if we assume that $\lambda_i-\mu_{\sigma(i)}$ are special for all $1\le i\le r$ then there exists one--dimensional $\lie g-$modules
 $Z_i$ such that $L(\lambda_i)\cong L(\mu_{\sigma(i)})\otimes Z_i$ for all $1\le i\le r.$
\end{cor}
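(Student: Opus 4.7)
The plan is to specialize Theorem~\ref{tensormainthm} to the connected, indecomposable setting, convert the resulting combinatorial matching into a character equality via Lemma~\ref{keylemmasulamb} and the Weyl--Kac formula~\eqref{WeylKac}, and finally upgrade this to a module isomorphism under the special hypothesis via Lemma~\ref{speciallem} and semisimplicity of $\mathcal{O}^{\mathrm{int}}$.

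First I would assume without loss of generality that $r\ge s$, pad $\overline{\mu}$ to $\widetilde{\mu}=(\mu_1,\ldots,\mu_s,0,\ldots,0)$ with $r-s$ appended zeros, and invoke Theorem~\ref{tensormainthm} to obtain the weight-sum identity $\sum_i\lambda_i=\sum_j\mu_j$ together with a bijection $\sigma:\mathcal{MC}(\overline{\lambda})\to\mathcal{MC}(\widetilde{\mu})$, $C\mapsto C_\sigma$, satisfying $C=C_\sigma$ and matching real-coroot values on $C^{\mathrm{re}}$. Connectedness of each $\Pi(\lambda_i)$ gives $|\mathcal{MC}(\overline{\lambda})|=r$, and indecomposability of $\lie g$ makes $\Pi(0)=\Pi$ itself a single connected component, so the padded zeros contribute $r-s$ copies of $\Pi$ to $\mathcal{MC}(\widetilde{\mu})$ and $|\mathcal{MC}(\widetilde{\mu})|=s+(r-s)=r$.

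The crux of the argument (and what I expect to be the main obstacle) is showing $r=s$. Any $\Pi(\lambda_i)$ paired by $\sigma$ with one of the extra copies of $\Pi$ coming from the padding must satisfy $\Pi(\lambda_i)=\Pi$ and $\lambda_i(h_\alpha)=0$ for every $\alpha\in\Pi^{\mathrm{re}}$, so $\lambda_i$ is special and $W$-invariant, and $L(\lambda_i)$ is one-dimensional by Lemma~\ref{speciallem}. Running the same analysis with the roles of $\overline{\lambda}$ and $\overline{\mu}$ reversed, and using the weight-sum identity, one argues that any such degenerate pairing forces a matching between a one-dimensional $L(\lambda_i)$ and a corresponding one-dimensional $L(\mu_j)$; consequently the bijection can be rearranged so that no padded zero is actually needed, giving $r=s$ and a bona fide permutation $\sigma\in S_r$.

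Once $r=s$ is established, for each $i$ we have $\Pi(\lambda_i)=\Pi(\mu_{\sigma(i)})$ and $\lambda_i(h_\alpha)=\mu_{\sigma(i)}(h_\alpha)$ on all of $\Pi^{\mathrm{re}}$ (which is automatically contained in $\Pi(\lambda_i)$). By Lemma~\ref{keylemmasulamb} this yields $U_{\lambda_i}=U_{\mu_{\sigma(i)}}$, and the Weyl--Kac formula~\eqref{WeylKac} then gives
\[
\mathrm{ch}\,L(\lambda_i)=e^{\lambda_i}\,\frac{U_{\lambda_i}}{U_0}=e^{\lambda_i-\mu_{\sigma(i)}}\cdot e^{\mu_{\sigma(i)}}\,\frac{U_{\mu_{\sigma(i)}}}{U_0}=e^{\lambda_i-\mu_{\sigma(i)}}\,\mathrm{ch}\,L(\mu_{\sigma(i)}),
\]
which is the first conclusion. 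For the final assertion, the equality on real coroots makes $\lambda_i-\mu_{\sigma(i)}$ vanish on every $h_\alpha$ with $\alpha\in\Pi^{\mathrm{re}}$, hence $W$-invariant; assuming additionally it is special, Lemma~\ref{speciallem} gives $Z_i:=L(\lambda_i-\mu_{\sigma(i)})$ one-dimensional with character $e^{\lambda_i-\mu_{\sigma(i)}}$, and the character identity $\mathrm{ch}\,L(\lambda_i)=\mathrm{ch}(L(\mu_{\sigma(i)})\otimes Z_i)$ promotes to an isomorphism of $\lie g$-modules by complete reducibility of $\mathcal{O}^{\mathrm{int}}$.
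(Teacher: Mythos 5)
Your write-up coincides with the paper's proof everywhere except at the step you yourself single out as the crux, namely establishing $r=s$. The rest is exactly the paper's argument: from the matching one gets $\Pi(\lambda_i)=\Pi(\mu_{\sigma(i)})$ and agreement on real coroots, hence $U_{\lambda_i}=U_{\mu_{\sigma(i)}}$ (the paper argues this via $W$--invariance of $\lambda_i-\mu_{\sigma(i)}$ and $\Omega(\lambda_i)=\Omega(\mu_{\sigma(i)})$ rather than by citing Lemma \ref{keylemmasulamb}, but it is the same computation), the character identity follows from the Weyl--Kac formula, and the final assertion from Lemma \ref{speciallem} together with complete reducibility of $\mathcal{O}^{\mathrm{int}}$. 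All of that part of your proposal is correct.

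The gap is your claim that a degenerate pairing ``forces a matching between a one-dimensional $L(\lambda_i)$ and a corresponding one-dimensional $L(\mu_j)$,'' so that the padded zeros can be rearranged away. Theorem \ref{tensormainthm} only provides a bijection $\mathcal{MC}(\overline{\lambda})\to\mathcal{MC}(\widetilde{\mu})$, and when $r>s$ the cardinalities already agree ($r$ on each side, since each padded zero contributes the single component $\Pi$), so no contradiction arises; a $\lambda_i$ matched to a padded zero is forced to be special and $W$--invariant, hence $L(\lambda_i)$ is one-dimensional by Lemma \ref{speciallem}, but nothing forces any $L(\mu_j)$ to be one-dimensional, and ``running the analysis with the roles reversed'' is not available when $r>s$ strictly. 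In fact $r=s$ is simply false without excluding one-dimensional factors: take $\lambda_2\in P_+$ special and $W$--invariant, $\lambda_1$ arbitrary with $\Pi(\lambda_1)$ connected, and $\mu_1=\lambda_1+\lambda_2$; then $L(\lambda_1)\otimes L(\lambda_2)\cong L(\mu_1)$, every hypothesis of the corollary holds (for indecomposable $\lie g$ one has $\Pi(\lambda_2)=\Pi$ connected and $\Pi(\mu_1)=\Pi(\lambda_1)$), yet $r=2\neq 1=s$. The correct repair is not a combinatorial rearrangement of the bijection but the additional hypothesis that no $L(\lambda_i)$ or $L(\mu_j)$ is one-dimensional; then no component can be matched to a padded zero, so the $r-s$ padded copies of $\Pi$ have no preimage and $r=s$ follows. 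To be fair, the paper's own one-line proof also elides this point, so your instinct that this is the delicate step was sound --- but the argument you supply does not close it.
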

\begin{pf}
Since $\Pi(\lambda_i)$ and $\Pi(\mu_j)$ are connected for all $1\le i\le r,\  1\le j\le s$, by Theorem \ref{tensormainthm}, we get $r=s$ and there exists a permutation
 $\sigma\in S_r$ such that $\Pi(\lambda_i)=\Pi(\mu_j)$ and $\lambda_i(h_\a)=\mu_{\sigma(i)}(h_\a)$ for all $\a\in \Pi^{\rm{re}}$. 
 This implies that $\lambda_i-\mu_{\sigma(i)}$ is $W-$invariant and we get
 $\mathrm{ch} L(\lambda_i)=e^{\lambda_i-\mu_{\sigma(i)}}\mathrm{ch} L(\mu_{\sigma(i)})$ for all $1\le i\le r$
 since $\Omega(\lambda_i)=\Omega({\mu_{\sigma(i)}})$ for all $1\le i\le r$. The last statement immediately follows from Lemma $\ref{speciallem}$.
\end{pf}

\begin{rem}
Suppose that the Borcherds--Cartan matrix is indecomposable and $a_{ii}=2$ for all $i\in I$. 
Then we have $\Pi=\Pi_{re}$ and $\Pi_{\rm{im}}=\emptyset$. Since $\Pi(\lambda)=\Pi$ is connected for all $\lambda\in P_+$, we recover the result for Kac--Moody algebras proved in \cite{VV12}.
This result was first proved for finite dimensional simple Lie algebras in \cite{Rajan}.
\end{rem}

\subsection{Examples}\label{examplesuniquefails}
We will end with few examples which helps us to understand our results.
\begin{example}
Suppose that the Borcherds--Cartan matrix is $\left[\begin{array}{ccc}
2 & -1 & 0\\
-1 & 0 & -1\\
0 & -1 & 2
\end{array}\right]$. Denote by $\Pi_{re}=\{\alpha_1,\alpha_2\}$ and $\Pi_{im}={\beta}$. 
Note that $(\alpha_1,\alpha_2)=0$, $(\a_1, \beta)=(\a_2, \beta)=-1$ and $(\beta, \beta)=0$.
In particular, the Weyl group $W=W_1\times W_2$ where $W_i=\{1, \bold s_{\a_i}\}$ for $i=1, 2$. 
Let $\Gamma=\{\lambda\in P_+: (\lambda, \beta)=0\}$. 
For $\lambda\in \Gamma,$ we have $\beta\in \lambda^{\perp}_{\rm{im}}$ and in particular $\lambda$ is a special dominant weight such that $\Pi(\lambda)=\Pi$ is connected. 
So, Corollary \ref{uniquecor} applies
to elements of $\Gamma$. Suppose $\mu\in P_+\backslash \Gamma$, then we have $\Pi(\mu)=\{\a_1, \a_2\}$ which is not connected. Write
$\mu=a_1\omega_1+a_2\omega_2+a_3\omega_3$ with $a_3\neq 0$, where $\omega_i, i\in I$ are fundamental weights defined by $\omega_i(h_j)=\delta_{ij}$ for all $i, j\in I$.
Since $\Omega(\mu)=0$, we have 
$$\text{$\ch L(\mu)
=e^{(a_3+1)\omega_3}\frac{U^1_{a_1\omega_1}\cdot U^2_{a_2\omega_2}}{e^{\rho}U_0}$
where $U^i_{a_i\omega_i}=\sum\limits_{w\in W_i}(-1)^{\ell(w)}e^{w(a_i+1)\omega_i}$, $i=1, 2$.}$$
Thus for $a_1,a_2,b_1,b_2\in \mathbb{Z}_+$ and $a_3, b_3\in \mathbb{N}$, we get
$$L(a_1\omega_1+a_2\omega_2+a_3\omega_3)\otimes L(b_1\omega_1+b_2\omega_2+b_3\omega_3)=
L(a_1\omega_1+b_2\omega_2+a_3\omega_3)\otimes L(b_1\omega_1+a_2\omega_2+b_3\omega_3).$$
\end{example}

\begin{rem}
A similar phenomena happens for typical finite-dimensional modules over Lie superalgebras. For example, take $\mathfrak{g}=\mathfrak{sl}(m|n)$
 and $\Pi$ to be the standard choice of simple roots. Then for $\lambda$ typical, $\mathcal{MC}(\lambda)$ has two components and the character of 
 $L(\lambda)$ factorize to two terms. However, due to the lack of complete reducibility, this is not enough to conclude about tensor products of representations. 
 The subcategory of typical finite-dimensional $\mathfrak{g}$-modules admits complete reducibility but is not closed under tensor products. \end{rem}

\begin{example}
 Suppose that $a_{ij}\in -\mathbb{Z}_+$ for all $i, j.$ Then we have $\Pi^{\rm{re}}=\emptyset$ and $\Pi=\Pi^{\rm{im}}$.
 Let $\lambda\in P_+$ and let $k_i\in \mathbb{N}, 1\le i\le r$ and $\ell_i\in \mathbb{N}, 1\le i\le r$ such that $\sum\limits_{i=1}^{r}k_i=\sum\limits_{j=1}^{r}\ell_j=:s.$
 Then we have $\ch (L(k_1\lambda)\otimes\cdots \otimes L(k_r\lambda))=e^{(s-r)\lambda}\ch L(\lambda)=\ch (L(\ell_1\lambda)\otimes\cdots \otimes L(\ell_r\lambda))$ and hence we have
 $L(k_1\lambda)\otimes\cdots \otimes L(k_r\lambda)\cong  L(\ell_1\lambda)\otimes\cdots \otimes L(\ell_r\lambda).$
\end{example}





\bibliographystyle{plain}

\begin{thebibliography}{10}

\bibitem{AKV17}
G. Arunkumar, Deniz Kus, R. Venkatesh.
\newblock{\em Root multiplicities for Borcherds algebras and graph coloring.}
\newblock{ J. Algebra 499}, 538--569, 2018.

\bibitem{Bor88}
Richard Borcherds.
\newblock Generalized {K}ac--{M}oody algebras.
\newblock { J. Algebra}, 115(2):501--512, 1988.


 \bibitem{Hump}
 James E. Humphreys. \newblock{\em Reflection groups and Coxeter groups}, volume 29 of Cambridge Studies
in Advanced Mathematics. Cambridge University Press, Cambridge, 1990.
  
\bibitem{Ej96}
Elizabeth Jurisich.
\newblock An exposition of generalized {K}ac--{M}oody algebras.
\newblock In {\em Lie algebras and their representations ({S}eoul, 1995)},
  volume 194 of {\em Contemp. Math.}, pages 121--159. Amer. Math. Soc.,
  Providence, RI, 1996.

\bibitem{K90}
Victor~G. Kac.
\newblock {\em Infinite-dimensional {L}ie algebras}.
\newblock Cambridge University Press, Cambridge, third edition, 1990.

\bibitem{kang}
 Kyeonghoon Jeong, Seok-Jin Kang and  Masaki Kashiwara.
\newblock {\em
Crystal bases for quantum generalized Kac-Moody algebras.}
\newblock Proc. Lond. Math. Soc. 90(3), no. 2, 395--438, 2005.



\bibitem{Rajan}
C. S. Rajan. 
\newblock{\em Unique decomposition of tensor products of irreducible representations of simple
algebraic groups.} Ann. of Math. (2), 160(2):683--704, 2004.


\bibitem{VV12}
R.~Venkatesh and Sankaran Viswanath.
\newblock{\em
Unique factorization of tensor products for Kac--Moody algebras.} 
\newblock
{ Adv. Math.} 231, no. 6, 3162--3171, 2012.


\end{thebibliography}

\end{document}